\DeclareMathAlphabet{\mathsfsl}{OT1}{cmss}{m}{sl}
\DeclareMathAlphabet{\mybff}{OT1}{cmr}{bx}{it}
\newcommand{\lang}{\textit}
\newcommand{\term}{\emph}
\newcommand{\cnst}[1]{\mathrm{#1}}
\renewcommand{\phi}{\varphi}
\newcommand{\eps}{\varepsilon}
\newcommand{\econst}{\mathrm{e}}
\newcommand{\zerovct}{\mathbf{0}}
\newcommand{\zeromtx}{\mathbf{0}}
\newcommand{\Id}{\mathbf{I}}
\newcommand{\R}{\mathbb{R}}
\newcommand{\abs}[1]{\left\vert {#1} \right\vert}
\newcommand{\abssq}[1]{{\abs{#1}}^2}
\newcommand{\Expect}{\operatorname{\mathbb{E}}}
\newcommand{\vct}[1]{\mybff{#1}}
\newcommand{\mtx}[1]{\mybff{#1}}
\newcommand{\adj}{*}
\newcommand{\diag}{\operatorname{diag}}
\newcommand{\trace}{\operatorname{tr}}
\newcommand{\psdle}{\preccurlyeq}
\newcommand{\norm}[1]{\left\Vert {#1} \right\Vert}
\newcommand{\normsq}[1]{\norm{#1}^2}
\newcommand{\smnorm}[2]{{\bigl\Vert {#2} \bigr\Vert}_{#1}}
\newcommand{\pnorm}[2]{\norm{#2}_{#1}}
\newcommand{\infnorm}[1]{\norm{#1}_{\infty}}
\DeclareMathSymbol{\mySigma}{\mathalpha}{letters}{"06}
\newcommand{\covmtx}{\mybff{\mySigma}}
\begin{document}

\title{The Masked Sample Covariance Estimator: \\
An Analysis via Matrix Concentration Inequalities}

\shorttitle{The Masked Sample Covariance Estimator} %%%for recto running head
\shortauthorlist{R.~Y.~Chen, A.~Gittens, and J.~A.~Tropp} %%% for verso running head

\author{{\sc Richard Y.~Chen}, \\[2pt]
{\email{ycchen@caltech.edu}} \\[6pt]
{\sc Alex Gittens} \\[2pt]
{\email{gittens@cms.caltech.edu}} \\[6pt]
{\sc and}\\[6pt]
{\sc Joel A.~Tropp} \\[2pt]
{\email{Corresponding author: jtropp@cms.caltech.edu} \\[6pt]
{Dept.~of Computing and Mathematical Sciences \\
California Institute of Technology \\
1200 E.~California Blvd., MC 305-16 \\
Pasadena, CA 91125-5000, USA}}}

\maketitle

\begin{abstract}
{Covariance estimation becomes challenging in the regime where the number $p$ of variables outstrips the number $n$ of samples available to construct the estimate.  One way to circumvent this problem is to assume that the covariance matrix is nearly sparse and to focus on estimating only the significant entries.  To analyze this approach, Levina and Vershynin (2011) introduce a formalism called \term{masked covariance estimation}, where each entry of the sample covariance estimator is reweighted to reflect an \lang{a priori} assessment of its importance.

\vspace{6pt} %\notate{return to abstract...}

This paper provides a short analysis of the masked sample covariance estimator by means of a matrix concentration inequality.  The main result applies to general distributions with at least four moments.  Specialized to the case of a Gaussian distribution, the theory offers qualitative improvements over earlier work.  For example, the new results show that $n = {\rm O}(B \log^2 p)$ samples suffice to estimate a banded covariance matrix with bandwidth $B$ up to a relative spectral-norm error, in contrast to the sample complexity $n = {\rm O}(B \log^5 p)$ obtained by Levina and Vershynin.}
{Covariance estimation, matrix concentration inequality, matrix Khintchine inequality, matrix Rosenthal inequality, random matrix, Schur product.}
%%%% If classification number provided then
\\
2010 Math Subject Classification. Primary: 60B20; Secondary: 62H12, 60F10, 60G50.
\end{abstract}

\section{Introduction}

A fundamental problem in multivariate statistics is to obtain an accurate estimate of the covariance matrix of a multivariate distribution given independent samples from the distribution.  This challenge arises whenever we need to understand the spread of the data and its marginals, for example, when we perform regression analysis~\cite{Free05} or principal component analysis~\cite{Jol02}.

In the classical setting where the number of samples exceeds the number of variables, the behavior of standard covariance estimators is well understood~\cite{JW07:Applied-Multivariate,Mui82:Aspects-Multivariate,MKB80:Multivariate-Analysis}.  The random matrix literature also contains a substantial amount of relevant work; we refer to the book~\cite{BS10:Spectral-Analysis} and the survey~\cite{Ver11} for further information.

Modern applications, in contrast, often involve a small number of samples and a large number of variables.  The paucity of data makes it impossible to obtain an accurate estimate of a general covariance matrix.  As a remedy, we must frame additional model assumptions and develop estimators that exploit this extra structure.  Over the last few years, a number of papers, including~\cite{FB07,BL08,BLe08,K08,RLZ09,CZZ10}, have focused on the situation where the covariance matrix is sparse or nearly so.  In this case, we imagine that we could limit our attention to the significant entries of the covariance matrix and thereby perform more accurate estimation with fewer samples.

This paper studies a particular technique for the sparse covariance problem that we call the \term{masked sample covariance estimator}.  This approach uses a mask matrix, constructed \lang{a priori}, to specify the importance we place on each entry of the covariance matrix.  By reweighting the sample covariance estimate using a mask, we can reduce the error that arises from imprecise estimates of covariances that are small or zero.
%For instance, if the covariance matrix decays away from the diagonal, we can use a banded mask to focus our attention on the diagonal.
The mask matrix formalism was introduced by Levina and Vershynin~\cite{LV11} to provide a unified treatment of some earlier methods for sparse covariance estimation; we refer to their paper for a more detailed discussion of prior work.

%For instance, if the covariance matrix can be approximated by a banded matrix, one sets the entries of the mask to zero outside of the band.

%Levina and Vershynin derive an elegant bound~\cite[Thm.~2.1]{LV11} for masked covariance estimation of a Gaussian distribution.  In this work, we develop a completely different analysis based on the \term{matrix Laplace transform method}~\cite{AW02,Tropp11}.  The advantage of this approach is that it applies to general subgaussian distributions and it allows us to obtain more refined information about the quality of the masked sample covariance estimator.  %In some situations, the new results provide a really significant improvement over prior work.
%
%The rest of this Introduction provides an overview of masked covariance estimation and its relationship with classical covariance estimation.  In Section~\ref{sec:intro-gauss}, we present a simplified result for the behavior of the masked sample covariance estimator applied to a Gaussian distribution, and we offer a concrete comparison with the results of Levina and Vershynin~\cite[Thm.~2.1]{LV11}.  More detailed results appear in Section~\ref{sec:result-and-proof}.

This paper provides a new analysis of the masked sample covariance estimator
using some recent ideas from random matrix
theory~\cite{Rud99:Random-Vectors,AW02,RV07:Sampling-Large,Tropp11,Tro11:Freedmans-Inequality,
Oli10a,Ver11,JZ11:Noncommutative-Bennett,MJCFT12:Matrix-Concentration}.
These methods, collectively known as \term{matrix concentration inequalities},
are particularly well suited for studying a sum of independent random matrices.
The results provide strong bounds on the moments and exponential moments of
the spectral norm of the sum by harnessing information about the individual
summands.
Indeed, matrix concentration inequalities can be viewed as far-reaching extensions
of the classical inequalities for a sum of scalar random variables~\cite{dlPG99:Decoupling}.
%due to Bernstein, Rosenthal, and other authors.
As we demonstrate in this work, matrix concentration inequalities sometimes
allow us to replace devilishly hard
calculations with simple arithmetic.

One of our main reasons for writing this paper is to show
that matrix concentration inequalities can streamline
the analysis of random matrices that arise in statistical applications.
%They can even provide good results in situations where other methods are silent.
We believe that the simplicity of our arguments and the strength of our conclusions
make a compelling case for the value of these methods.  Indeed, we hope
that matrix concentration inequalities will find a place in the toolkit of
researchers working on multivariate problems in statistics.

%\notate{The real point of this paper is to show how new ideas from RMT can be used to address problems in statistics.  It is not about statistical practice.}

\subsection{Classical Covariance Estimation} \label{sec:classical}

Consider a random vector
$$
\vct{x} = (X_1, X_2, \dots, X_p)^\adj \in \R^p.
$$
Let $\vct{x}_1, \dots \vct{x}_n$ be independent random vectors that follow the same distribution as $\vct{x}$.  For simplicity, we assume that the distribution is known to have zero mean: $\Expect \vct{x} = \zerovct$.  The \term{covariance matrix} $\covmtx$ is the $p \times p$ matrix that tabulates the second-order statistics of the distribution:
\begin{equation} \label{eqn:cov}
\covmtx := \Expect( \vct{x} \vct{x}^\adj ),
\end{equation}
where ${}^\adj$ denotes the transpose operation.  The classical estimator for the covariance matrix is the \term{sample covariance matrix}: %, which is obtained from~\eqref{eqn:cov} by the plug-in principle:
\begin{equation} \label{eqn:sample-cov}
\widehat{\covmtx}_n := \frac{1}{n} \sum\nolimits_{i=1}^n \vct{x}_i \vct{x}_i^\adj.
\end{equation}
The sample covariance matrix is an unbiased estimator of the covariance matrix:
$\Expect \widehat{\covmtx}_n = \covmtx$.

Given a tolerance $\eps \in (0, 1)$, we can study how many samples $n$ are typically required to provide an estimate with relative error $\eps$ in the spectral norm:
\begin{equation} \label{eqn:intro-rel-error-bd}
\Expect \smnorm{}{ \widehat{\covmtx}_n - \covmtx } \leq \eps \norm{ \covmtx }.
%\quad\text{with high probability}.
\end{equation}
This type of spectral-norm error bound is quite powerful.  It limits the magnitude of the estimation error for each entry of the covariance matrix; it provides information about the variance of each marginal of the distribution of $\vct{x}$; it even controls the error in estimating the eigenvalues of the covariance using the eigenvalues of the sample covariance. %because of Weyl's theorem~\cite[theorem.~III.2.1]{Bha97:Matrix-Analysis}.

Unfortunately, the error bound~\eqref{eqn:intro-rel-error-bd} for the sample covariance estimator demands a lot of samples.  Indeed, suppose that the covariance matrix has full rank.  When $n < p$, the sample covariance is rank-deficient, so the spectral norm error is bounded away from zero!
%  Then the number of samples must be at least as large as the number of variables to obtain a nontrivial guarantee.  

Typical positive results state that the sample covariance estimator is precise when the number of samples is proportional to the number of variables, provided that the distribution decays fast enough.  For example, assuming that $\vct{x}$ follows a normal distribution,
%$$
%\Expect \smnorm{}{\widehat{\covmtx}_n-\covmtx}
%	\leq \cnst{C} \left[ \sqrt{\frac{p}{n}} + \frac{p}{n} + {\rm o}(n^{-1/2}) \right] \norm{\covmtx},
%$$
%where the last term is asymptotic to zero as $n \to \infty$.  As a consequence,
\begin{equation} \label{eqn:full-samp-complexity}
n \geq \cnst{C} \, \eps^{-2} p
\quad\Longrightarrow\quad
\smnorm{}{\widehat{\covmtx}_n-\covmtx} \leq \eps \norm{ \covmtx }
\quad\text{with high probability.}
\end{equation}
We use the convention that $\cnst{C}$ denotes an absolute constant whose value may change from appearance to appearance.  See~\cite[Thm.~57 et seq.]{Ver11} for details of obtaining the bound~\eqref{eqn:full-samp-complexity}.

%The work of Srivastava and Vershynin~\cite{SV11:Covariance-Estimation} contains the most recent news on classical covariance estimation.

\subsection{The Masked Sample Covariance Estimator}
\label{sec:intro-motive}

In the regime $n \ll p$, where we have very few samples, we cannot hope to achieve an estimate like~\eqref{eqn:intro-rel-error-bd} for a general covariance matrix.  Instead, we must instate additional assumptions and incorporate this prior information to construct a regularized estimator.  Over the last few years, researchers have studied the case where the covariance matrix is sparse or nearly sparse.  In this setting, we can often refine our estimation procedure by focusing on the most significant entries of the covariance matrix.

%\vspace{6pt}
%\begin{center}
%\fbox{\begin{minipage}{0.75\textwidth}
%\emph{If we only need to estimate a small portion of the covariance matrix,
%then we can reduce the number of samples dramatically.}\end{minipage}}
%\end{center}
%\vspace{6pt}

One way to formalize this idea is to construct a
symmetric $p \times p$ matrix $\mtx{M}$ with real entries, which we call the \term{mask matrix}.
In the simplest case, the mask matrix has 0--1 values that indicate which entries of the covariance we attend to.  A unit entry $m_{ij} = 1$ means that we estimate the interaction between the $i$th and $j$th variables, while a zero entry $m_{ij} = 0$ means that we abdicate from making the estimate.  More generally, we can allow the components of the mask to range over the interval $[0, 1]$, in which case the size of $m_{ij}$ is proportional to the importance of estimating
the $(i, j)$ entry of the covariance matrix.

Given a mask $\mtx{M}$, we define the \term{masked sample covariance estimator}
$\mtx{M} \odot \widehat{\covmtx}_n$,
where the symbol $\odot$ denotes the componentwise (i.e., Schur  or Hadamard) product.  
The following expression bounds the root-mean-square spectral-norm error that this estimator incurs.
\begin{equation} \label{eqn:intro-full-err}
\left[ \Expect \smnorm{}{ \mtx{M} \odot \widehat{\covmtx}_n - \covmtx }^2 \right]^{1/2}
	\quad \leq\quad \underbrace{\left[ \Expect \smnorm{}{ \mtx{M} \odot
	\widehat{\covmtx}_n - \mtx{M} \odot \covmtx }^2 \right]^{1/2}}_{\text{variance}}
	\quad + \quad \underbrace{\smnorm{}{ \mtx{M} \odot \covmtx - \covmtx }}_{\text{bias}}.
\end{equation}
The second term in~\eqref{eqn:intro-full-err} represents the bias in the estimate owing to the presence of the mask, while the first term measures how much the estimator fluctuates about its mean value.
This bound is analogous with the classical bias--variance decomposition for the mean-squared-error (MSE)
of a point estimator.  

To obtain an effective estimator,
we must design a mask that controls both the bias and the variance in~\eqref{eqn:intro-full-err}.
We cannot neglect too many components of the covariance matrix,
or else the bias in the masked estimator may compromise its accuracy.
At the same time, each additional component we estimate contributes
to the size of the variance term.
In the case where the covariance matrix is sparse, it is natural to strike a balance
between these two effects by refusing to estimate entries of the covariance that we
know \emph{a priori} to be small or zero.

%Indeed, it is often better to make a zero estimate
%for a small component of the covariance matrix than to make an imprecise estimate.

%More generally, we can allow arbitrary nonnegative numbers in the mask.  When $m_{ij}$ is large, we study the interaction between the $i$th and $j$th variable carefully; when $m_{ij}$ is small, we are less vigilant.

Many of the regularization techniques for sparse covariance estimation studied in the literature, such as~\cite{BL08, FB07, CZZ10}, can be described using mask matrices.  These works focus on specific cases, such as banded masks and tapered masks, whereas we have followed Levina and Vershynin~\cite{LV11} by allowing an arbitrary symmetric mask $\mtx{M}$.  We refer to the papers cited in this paragraph for further background and references.

\begin{remark}[Adapted Masks]
In statistical practice, it may be more natural to \emph{estimate} the mask matrix $\mtx{M}$ from the observed samples, rather than to construct the mask \lang{a priori}.  A number of authors, including El Karoui~\cite{K08}, have studied the performance of covariance estimators with an adaptive threshold.  In this work, we focus on the simpler case where the mask is fixed.
It presents an interesting challenge to analyze a data-dependent mask using matrix concentration inequalities.
\end{remark}

\subsubsection{Example: The Banded Estimator of a Decaying Covariance Matrix}
\label{sec:banded-estimator}

Let us consider the case where the entries of the covariance matrix $\covmtx$ decay away from the diagonal.
Suppose that, for a fixed parameter $\alpha > 1$,
$$
\abs{ (\covmtx)_{ij} } \leq \abs{i - j + 1}^{-\alpha}
\quad\text{for each pair $(i, j)$ of indices}.
$$
This type of property might hold for a random process whose correlations are localized in time.  (That is, the current value of the process depends weakly on the past and the future.)  Related covariance structures arise
for random fields that have short spatial correlation scales.

A simple (suboptimal) approach to this covariance estimation problem
is to focus on a band of entries near the diagonal.
Suppose that the bandwidth $B := 2b+1$ for a nonnegative integer $b$.
For instance, a mask with bandwidth $B = 3$ for an ensemble of $p = 5$ variables takes the form
\begin{equation*} \label{eqn:band-mtx}
\mtx{M}_{\text{band}} :=
\begin{bmatrix}
	1 & 1 &  &  &  \\
	1 & 1 & 1 &  &  \\
	 & 1 & 1 & 1 &  \\
	 &  & 1 & 1 & 1 \\
	 &  &  & 1 & 1
\end{bmatrix}.
\end{equation*}
In this setting, it is straightforward to compute the bias term in~\eqref{eqn:intro-full-err}.
Indeed,
$$
\abs{ (\mtx{M} \odot \covmtx - \covmtx)_{ij} }
	\leq \begin{cases}
	\abs{i - j + 1}^{-\alpha}, & \abs{i - j} > b \\
	0, & \text{otherwise}.
\end{cases}
$$
Gershgorin's theorem~\cite[Sec.~6.1]{HJ85:Matrix-Analysis} implies that the spectral norm of a symmetric matrix is dominated by the maximum $\ell_1$ norm of a column,
so
$$
\norm{ \mtx{M} \odot \covmtx - \covmtx }
	\leq 2 \sum_{k > b} (k+1)^{-\alpha}
	\leq \frac{2}{\alpha -1} (b+1)^{1 - \alpha}.
$$
The second inequality follows when we compare the sum with an integral.
A similar calculation shows that $\norm{ \covmtx } \leq  1 + 2(\alpha - 1)^{-1}$.
Assuming the covariance matrix really does have constant spectral norm, it follows that
$$
\norm{ \mtx{M} \odot \covmtx - \covmtx }
	\lesssim B^{1 - \alpha} \norm{ \covmtx }
$$
On a relative scale, the bias decreases polynomially as we increase the bandwidth $B$ of the mask.
Note that this estimate follows from an easy application of classical matrix analysis.

We cannot complete the bound~\eqref{eqn:intro-full-err} for the estimation error
without understanding the behavior of the fluctuation term.
In contrast to the bias term,
this analysis is challenging, and it requires an excursion
into the field of random matrix theory.

\subsection{The Performance of Masked Covariance Estimation} \label{sec:intro-gauss}

This paper studies the variance term in
the error bound~\eqref{eqn:intro-full-err}
for the masked sample covariance estimator.
To perform this analysis, we must address a variety of issues:
How does the structure of the mask $\mtx{M}$ affect the performance of the estimator?
How many samples $n$ do we need to control the size of the fluctuation?
What role does the distribution of the underlying
random vector $\vct{x}$ play? % in the error term?

In Section~\ref{sec:result-and-proof}, we use a matrix
concentration inequality to obtain a bound for the
variance term in~\eqref{eqn:intro-full-err}
that holds for any distribution on $\vct{x}$ with four
finite moments.  In the Introduction, we focus on the simpler setting where
the random vector follows a normal distribution.  Our theory for this case
highlights the factors that affect the performance of the estimator.
We examine how the structure of the mask enters into the error
bound, and we describe how the error decreases with the number of samples.
We also discuss how the correlation among variables affects the difficulty
of the covariance estimate.

%We present a more general theory in Section~\ref{sec:result-and-proof}.

Note that this work focuses on the random matrix aspects of the masked
sample covariance estimator.  As a consequence,
we purposely avoid a detailed discussion of the
statistical issues.  In particular, we make no further study
of the (deterministic) bias term in~\eqref{eqn:intro-full-err}.
Nor do we presume to make any claims about statistical practice.

\subsubsection{The Complexity of a Mask} \label{sec:mask-complex}

The number of samples we need to control the variance in~\eqref{eqn:intro-full-err}
depends on ``how much'' of the covariance matrix we are attempting to estimate.
%The more interactions we are concerned with, the more samples we need to limit
%the fluctuation.
A ``complex'' mask requires us to estimate many interactions between variables, so
we need a lot of samples to limit the fluctuation of the masked
sample covariance estimator.
In this section, think about masks that take 0--1 values to gain intuition.

Our analysis identifies two separate metrics that quantify the complexity of the mask.
The matrix norm $\pnorm{1\to2}{\cdot}$ returns the maximum $\ell_2$ norm of a column.
The first complexity measure is the \emph{square} of the maximum column norm:
$$
\pnorm{1\to2}{ \mtx{M} }^2 := \max\nolimits_j \left[ \sum\nolimits_i m_{ij}^2 \right].
$$
Roughly, the bracket counts the number of interactions we want to estimate that involve the variable~$j$,
and the maximum computes a bound over all $p$ variables.  This
metric is ``local'' in nature.
The second complexity measure is the spectral norm $\norm{\mtx{M}}$ of the mask matrix,
which provides a more ``global'' view of the complexity of the interactions that we estimate.

Some examples may illuminate how these metrics reflect the properties of the mask.
First, suppose that we estimate the entire covariance matrix, so the mask is the matrix of ones:
$$
\mtx{M} = \text{matrix of ones}
\quad\Longrightarrow\quad
\pnorm{1\to2}{\mtx{M}}^2 = p
\quad\text{and}\quad
\norm{\mtx{M}} = p.
$$
%The value $p$ here corresponds with the factor $p$ in the sample complexity bound~\eqref{eqn:full-samp-complexity}.  
Next, consider the mask that arises from the banded estimator in Section~\ref{sec:banded-estimator}:
$$
\mtx{M} = \text{0--1 matrix, bandwidth $B$}
\quad\Longrightarrow\quad
\pnorm{1\to2}{\mtx{M}}^2 \leq B
\quad\text{and}\quad
\norm{\mtx{M}} \leq B
$$
because there are at most $B$ ones in each row and column.  When $B \ll p$, the banded mask
asks us to estimate fewer interactions than the full mask, so we expect the
covariance estimate to be easier.

\begin{remark}[Are the Two Metrics Really Different?]
In the examples above, the two metrics take the same value, but this coincidence does not always occur.  Although the spectral norm dominates the maximum $\ell_2$ norm of a column, the \emph{square} of the maximum column norm can be substantially larger or substantially smaller than the spectral norm.
%We have omitted examples to support this point because they do not seem to arise naturally in the setting of masked covariance estimation.
\end{remark}

\begin{remark}[Scaling]
We can reduce the size of both complexity measures by rescaling the mask,
but changing the scale of the mask may also increase the bias term in~\eqref{eqn:intro-full-err}.
When studying the masked sample covariance estimator in the context of a particular
application, it is essential to consider both the variance and the bias terms.
\end{remark}

\subsubsection{Masked Covariance Estimation for Multivariate Normal Distributions}

We are now prepared to present our main result for the case where the random
vector $\vct{x}$ follows a normal distribution with zero mean.
The statement involves the norm $\pnorm{\max}{\cdot}$,
which returns the maximum absolute entry of a matrix.

%\notate{REVIEW THE REST OF THIS SECTION AFTER FIXING REMAINDER OF PAPER!}

\begin{theorem}[Masked Covariance Estimation for a Gaussian Distribution] \label{thm:gaussian2}
Fix a $p \times p$ symmetric mask matrix $\mtx{M}$, where $p \geq 3$.
Suppose that $\vct{x}$ is a Gaussian random vector in $\R^p$ with mean zero.  Define the covariance matrix $\covmtx$ and the sample covariance matrix $\widehat{\covmtx}_n$ as in~\eqref{eqn:cov} and~\eqref{eqn:sample-cov}.  Then the variance
of the masked sample covariance estimator satisfies
\begin{multline} \label{eqn:sharp-gaussian}
\left[ \Expect \smnorm{}{\mtx{M} \odot \widehat{\covmtx}_n - \mtx{M} \odot \covmtx}^2 \right]^{1/2} \\
	\leq \cnst{C} \left[ \left( \frac{\pnorm{\max}{\covmtx}}{\norm{\covmtx}} \cdot \frac{\pnorm{1\to2}{\mtx{M}}^2 \log p}{n} \right)^{1/2}
		+ \frac{\pnorm{\max}{\covmtx}}{\norm{\covmtx}} \cdot \frac{\norm{\mtx{M}} \log p \cdot \log(np)}{n} \right] \norm{\covmtx}.
\end{multline}
%In particular,
%\begin{equation} \label{eqn:gaussian}
%\Expect \smnorm{}{\mtx{M} \odot \widehat{\covmtx}_n - \mtx{M} \odot \covmtx}
%	\leq \cnst{C} \left[ \left( \frac{\pnorm{1\to2}{\mtx{M}}^2 \log p}{n} \right)^{1/2}
%		+ \frac{\norm{\mtx{M}} \log^2 p}{n} \right] \norm{ \covmtx }.
%\end{equation}
\end{theorem}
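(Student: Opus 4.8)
The plan is to regard the fluctuation as a normalized sum of independent, centered random matrices and to control its moments with a matrix Rosenthal inequality. First, decompose
\begin{equation*}
\mtx{M} \odot \widehat{\covmtx}_n - \mtx{M} \odot \covmtx \;=\; \frac{1}{n} \sum_{i=1}^{n} \mtx{Y}_i, \qquad \mtx{Y}_i := \mtx{M} \odot \bigl( \vct{x}_i \vct{x}_i^{\adj} \bigr) - \mtx{M} \odot \covmtx,
\end{equation*}
so that the $\mtx{Y}_i$ are independent and identically distributed, symmetric, and mean zero. For an integer $q \ge 1$ (to be taken comparable to $\log p$), a matrix Rosenthal--Pinelis inequality for such a sum takes the form
\begin{equation*}
\Bigl[ \Expect \smnorm{}{ \textstyle\sum_i \mtx{Y}_i }^{2q} \Bigr]^{1/(2q)} \;\le\; \cnst{C} \sqrt{q}\, \smnorm{}{ \textstyle\sum_i \Expect \mtx{Y}_i^2 }^{1/2} + \cnst{C}\, q \Bigl[ \Expect \max\nolimits_{i} \smnorm{}{ \mtx{Y}_i }^{2q} \Bigr]^{1/(2q)}.
\end{equation*}
Since $\bigl[\Expect\smnorm{}{\cdot}^{2q}\bigr]^{1/(2q)} \ge \bigl[\Expect\smnorm{}{\cdot}^{2}\bigr]^{1/2}$ and the summands are i.i.d., after dividing by $n$ the theorem reduces to: (i) bounding the \emph{matrix variance} $v := \smnorm{}{\Expect\mtx{Y}_1^2}$; (ii) bounding the maximal moment $\Expect\max_{i\le n}\smnorm{}{\mtx{Y}_i}^{2q}$; and (iii) choosing $q$.

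For the matrix variance, note $\Expect\mtx{Y}_1^2 \psdge \zeromtx$. I would compute it entrywise by feeding the Wick (Isserlis) formula for the fourth moments of the Gaussian vector $\vct{x}$; after the contribution of $\mtx{M}\odot\covmtx = \Expect\bigl[\mtx{M}\odot(\vct{x}\vct{x}^{\adj})\bigr]$ cancels, this gives
\begin{equation*}
\Expect\mtx{Y}_1^2 \;=\; \bigl(\mtx{M}\odot\covmtx\bigr)^2 \;+\; \covmtx \odot \bigl(\mtx{M}\,\diag(\covmtx)\,\mtx{M}\bigr).
\end{equation*}
I would bound each summand with Schur-product operator-norm inequalities. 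For the second summand, $\mtx{M}\diag(\covmtx)\mtx{M} \psdge \zeromtx$, so Schur's inequality $\smnorm{}{\mtx{A}\odot\mtx{B}} \le \smnorm{}{\mtx{A}}\max_i b_{ii}$ (valid for $\mtx{B} \psdge \zeromtx$) applies, and $\bigl(\mtx{M}\diag(\covmtx)\mtx{M}\bigr)_{ii} = \sum_j m_{ij}^2 (\covmtx)_{jj} \le \pnorm{\max}{\covmtx}\,\pnorm{1\to2}{\mtx{M}}^2$, using that $\covmtx \psdge \zeromtx$ forces $\pnorm{\max}{\covmtx} = \max_j (\covmtx)_{jj}$; this controls the second summand by $\smnorm{}{\covmtx}\,\pnorm{\max}{\covmtx}\,\pnorm{1\to2}{\mtx{M}}^2$. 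For the first summand, splitting $\covmtx = \covmtx^{1/2}\covmtx^{1/2}$ and applying Cauchy--Schwarz yields $\smnorm{}{\mtx{M}\odot\covmtx} \le \bigl(\smnorm{}{\covmtx}\,\pnorm{\max}{\covmtx}\bigr)^{1/2}\pnorm{1\to2}{\mtx{M}}$, hence $\smnorm{}{(\mtx{M}\odot\covmtx)^2} \le \smnorm{}{\covmtx}\,\pnorm{\max}{\covmtx}\,\pnorm{1\to2}{\mtx{M}}^2$. Altogether $v \le \cnst{C}\,\pnorm{\max}{\covmtx}\,\smnorm{}{\covmtx}\,\pnorm{1\to2}{\mtx{M}}^2$.

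For the maximal summand, the key identity is $\mtx{M}\odot(\vct{x}\vct{x}^{\adj}) = \diag(\vct{x})\,\mtx{M}\,\diag(\vct{x})$, so $\smnorm{}{\mtx{M}\odot(\vct{x}_i\vct{x}_i^{\adj})} \le \infnorm{\vct{x}_i}^2\,\smnorm{}{\mtx{M}}$, and together with $\smnorm{}{\mtx{M}\odot\covmtx} \le \pnorm{\max}{\covmtx}\,\smnorm{}{\mtx{M}}$ this gives $\smnorm{}{\mtx{Y}_i} \le \bigl(\infnorm{\vct{x}_i}^2 + \pnorm{\max}{\covmtx}\bigr)\smnorm{}{\mtx{M}}$. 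The quantity $\max_{i\le n}\infnorm{\vct{x}_i}$ is the maximum of $np$ Gaussian variables, each of variance at most $\pnorm{\max}{\covmtx}$, so a routine tail estimate gives $\bigl[\Expect\max_{i\le n}\infnorm{\vct{x}_i}^{4q}\bigr]^{1/(2q)} \le \cnst{C}\,\pnorm{\max}{\covmtx}\bigl(\log(np) + q\bigr)$, whence $\bigl[\Expect\max_{i\le n}\smnorm{}{\mtx{Y}_i}^{2q}\bigr]^{1/(2q)} \le \cnst{C}\,\pnorm{\max}{\covmtx}\,\smnorm{}{\mtx{M}}\bigl(\log(np)+q\bigr)$. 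Taking $q = \lceil\log p\rceil$ (so $q \asymp \log p$ since $p \ge 3$, and $q \le 2\log(np)$), substituting the two estimates into the Rosenthal inequality, dividing by $n$, and pulling $\smnorm{}{\covmtx}$ out of both resulting terms reproduces~\eqref{eqn:sharp-gaussian}.

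The main obstacle is the matrix-variance step: one must run the Gaussian fourth-moment expansion cleanly and, crucially, extract the \emph{local} complexity $\pnorm{1\to2}{\mtx{M}}^2$ from each piece of $\Expect\mtx{Y}_1^2$ — particularly from $(\mtx{M}\odot\covmtx)^2$, which naively suggests the far larger $\smnorm{}{\mtx{M}}^2$ or the too-crude $\smnorm{}{\covmtx}^2\,\pnorm{1\to2}{\mtx{M}}^2$; obtaining the sharp product $\smnorm{}{\covmtx}\,\pnorm{\max}{\covmtx}\,\pnorm{1\to2}{\mtx{M}}^2$ relies on using positive-semidefiniteness of $\covmtx$ to split it symmetrically across the Schur product. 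A secondary point is that the maximum-of-summands form of the Rosenthal inequality (rather than the $\sum_i\Expect\smnorm{}{\mtx{Y}_i}^{2q}$ form) is what lets the choice $q\asymp\log p$ avoid a spurious factor $n^{1/(2q)}$ in the second term; alternatively, one may truncate the Gaussians at level $\cnst{C}\,(\pnorm{\max}{\covmtx}\log(np))^{1/2}$ and apply matrix Bernstein to the truncated summands.
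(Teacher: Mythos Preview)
Your outline is correct and lands on the stated bound, but the path differs from the paper's in two linked respects.

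First, the paper does not specialize to Gaussians until the very end. It establishes a general bound (Theorem~\ref{thm:main}) valid for any zero-mean distribution with four finite moments, phrased in terms of the parameters $\mu_4 = \max_i(\Expect|X_i|^4)^{1/4}$ and $\nu = \sup_{\norm{\vct{u}}=1}(\Expect|\vct{u}^{\adj}\vct{x}|^4)^{1/4}$, and only afterward substitutes the Gaussian values $\mu_4^2 \le 2\,\pnorm{\max}{\covmtx}$ and $\nu^2 \le \sqrt{3}\,\norm{\covmtx}$. Your argument, by invoking the Isserlis formula, is tied to Gaussianity from the outset.

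Second, and this is where the real work sits, the variance step is organized differently. The paper symmetrizes first, so the quantity to control is $\Expect(\mtx{M}\odot\vct{x}\vct{x}^{\adj})^2$ rather than your $\Expect\mtx{Y}_1^2$. Instead of computing it explicitly, the paper (Lemma~\ref{lem:variance}) uses the algebraic identity
\[
(\mtx{M}\odot\vct{x}\vct{x}^{\adj})^2 \;=\; \sum\nolimits_i (\mtx{M}\,\mathbf{E}_{ii}\,\mtx{M}) \odot (X_i^2\,\vct{x}\vct{x}^{\adj}),
\]
then applies monotonicity of the Schur product to replace each $\Expect(X_i^2\,\vct{x}\vct{x}^{\adj})$ by $\lambda_{\max}(\cdot)\,\Id$, and bounds that eigenvalue via Cauchy--Schwarz on the moments, arriving at $\Expect(\mtx{M}\odot\vct{x}\vct{x}^{\adj})^2 \psdle \mu_4^2\,\nu^2\,\pnorm{1\to2}{\mtx{M}}^2\,\Id$. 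Your route---the closed-form Isserlis expansion $\Expect\mtx{Y}_1^2 = (\mtx{M}\odot\covmtx)^2 + \covmtx\odot(\mtx{M}\,\diag(\covmtx)\,\mtx{M})$ followed by two Schur-product norm bounds---is equally valid and arguably more transparent in the Gaussian case; it recovers the same product $\pnorm{\max}{\covmtx}\,\norm{\covmtx}\,\pnorm{1\to2}{\mtx{M}}^2$. What is lost is generality: the paper's lemma works for any distribution with four moments, which is precisely what enables the broader Theorem~\ref{thm:main}. The maximum-summand step (via $\mtx{M}\odot\vct{x}\vct{x}^{\adj}=\diag(\vct{x})\,\mtx{M}\,\diag(\vct{x})$) and the moment bound for $\max_i\infnorm{\vct{x}_i}$ are handled the same way in both arguments.
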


The proof of Theorem~\ref{thm:gaussian2} appears in Section~\ref{subsec:gaussian}.
The rest of this section consists of some discussion of the result, as well as
comparisons with related work.

%Note that the transition from~\eqref{eqn:sharp-gaussian} to~\eqref{eqn:gaussian}
%follows from the inequality $\pnorm{\max}{\covmtx} \leq \norm{\covmtx}$.

\subsubsection{Extension to Distributions with Nonzero Mean}

In the actual practice of covariance estimation, we would center each sample empirically by subtracting the sample mean $\bar{\vct{x}} = n^{-1} \sum\nolimits_{i=1}^n \vct{x}_i$.  The sample covariance~\eqref{eqn:sample-cov} is computed using the centered samples $\widetilde{\vct{x}}_i = \vct{x}_i - \bar{\vct{x}}$ instead of the original samples $\vct{x}_i$.
Theorem~\ref{thm:gaussian2} can be extended to cover the masked covariance estimator formed with centered samples.  See~\cite[Rem.~4]{LV11} for the details of the argument.

\subsubsection{Discussion and Interpretation}

Theorem~\ref{thm:gaussian2} exposes several phenomena in the behavior
of the masked sample covariance estimator.  The first term
on the right-hand side of~\eqref{eqn:sharp-gaussian} reflects the
scale for moderate deviations of the estimator, and it depends on the
``local'' complexity of the mask.
%The factor $n^{-1/2}$ is analogous with the scaling in the central limit theorem.
The second term on the right-hand side of~\eqref{eqn:sharp-gaussian}
reflects the scale for large deviations of the estimator.  It
depends on the ``global'' complexity of the mask.
When the sample size $n$ is large, the first term drives
the bound because the second term usually decays faster.

The ratio of the maximum entry of the covariance matrix to the spectral norm
is an interesting feature of~\eqref{eqn:sharp-gaussian}.  The ratio never
exceeds one, but it can be as small as $p^{-1}$ when the covariance matrix
has rank one.  We interpret this factor as saying that covariance estimation
is easier when the variables are highly correlated.

\subsubsection{Sample Complexity Bound}

Markov's inequality can be used to convert~\eqref{eqn:sharp-gaussian} into an error bound that holds in probability, so
Theorem~\ref{thm:gaussian2} also allows us to develop conditions on the number $n$ of samples that we need to control the size of the fluctuation.  
%For example, with probability at least 99\%,
%\begin{equation} \label{eqn:highprob}
%\smnorm{}{\mtx{M} \odot \widehat{\covmtx}_n - \mtx{M} \odot \covmtx}
%	\leq \cnst{C} \left[ \left( \frac{\pnorm{1\to2}{\mtx{M}}^2 \log p}{n} \right)^{1/2}
%		+ \frac{\norm{\mtx{M}} \log^2(np)}{n} \right] \norm{ \covmtx }.
%\end{equation}
To obtain the sample complexity, assume that $n \leq p$,
and select an error tolerance $\eps \in (0, 1)$.
Then there is a constant $\cnst{C}_{99\%}$ for which
%~\eqref{eqn:highprob} yields the statement
\begin{equation} \label{eqn:masked-size}
n \geq \cnst{C}_{99\%}
\left[ \frac{\pnorm{1\to2}{\mtx{M}}^2 \log p}{\eps^2} + \frac{\norm{\mtx{M}} \log^2 p}{\eps} \right]
\frac{\pnorm{\max}{\covmtx}}{\norm{\covmtx}}
\quad\Longrightarrow\quad
\smnorm{}{\mtx{M} \odot \widehat{\covmtx}_n - \mtx{M} \odot \covmtx} \leq \eps \norm{ \covmtx }
\end{equation}
with probability at least 99\%.  
See the discussion after Theorem~\ref{thm:main}
for information about how to obtain sample
complexity bounds that hold with higher probability.

\subsubsection{Example: The Banded Covariance Estimator}

Consider the banded covariance estimation problem in Section~\ref{sec:banded-estimator}, with the mask
$$
\mtx{M} = \text{0--1 matrix with bandwidth $B$}.
$$
The sample complexity bound~\eqref{eqn:masked-size} and the norm calculations from Section~\ref{sec:mask-complex}
demonstrate that
\begin{equation} \label{eqn:banded-samp-comp}
n \geq \cnst{C} \left[ \frac{B \log p}{\eps^2} + \frac{B \log^2 p}{\eps} \right] \cdot \frac{\pnorm{\max}{\covmtx}}{\norm{\covmtx}}
%\quad\Longrightarrow\quad
%\smnorm{}{\mtx{M} \odot \widehat{\covmtx}_n - \mtx{M} \odot \covmtx} \leq \eps \norm{ \covmtx }
\end{equation}
is sufficient to obtain a relative spectral-norm error $\eps$ with constant probability.
In particular, the condition $n \gtrsim B \log^2 p$ always ensures a constant
relative error in~\eqref{eqn:sharp-gaussian}.
It follows that, when $B \ll p$, the variance of the estimator can be small,
even when the number of samples is much smaller than the total number of variables.

%For comparison, recall the sufficient condition~\eqref{eqn:full-samp-complexity} that the sample complexity for estimating the entire covariance matrix with relative error $\eps$ satisfies
%$$
%n \geq \cnst{C} \, \eps^{-2} p.
%$$
%When the bandwidth is much smaller than the number of variables ($B \ll p$), the masked covariance estimator outperforms the classical covariance estimator.  On the other hand, when the bandwidth is comparable with the number of variables, the analysis of the masked covariance estimator gives a sample complexity bound~\eqref{eqn:banded-samp-comp} that is worse by a polylogarithmic factor.  This effect is mitigated when the ratio of the norms in~\eqref{eqn:banded-samp-comp} is small.

%Observe that, when $\eps$ is constant, the second summand in~\eqref{eqn:banded-samp-comp} dominates the first as $p \to \infty$.  On the other hand, the first summand is larger when $\eps \leq \log^{-1} p$.  In other words, the excess logarithm in the second term of~\eqref{eqn:banded-samp-comp} does not have an impact on the sample complexity when we are seeking highly accurate covariance estimates.

\subsubsection{Comparison with Bounds of Levina and Vershynin}

The most natural point of comparison for Theorem~\ref{thm:gaussian2} is the
main theorem of Levina and Vershynin~\cite[Thm.~2.1]{LV11}.
Their result states that, for a centered normal random vector $\vct{x}$,
the fluctuation in the masked sample covariance estimator satisfies
$$
\Expect \smnorm{}{\mtx{M} \odot \widehat{\covmtx}_n - \mtx{M} \odot \covmtx}
	\leq \cnst{C} \left[ \left( \frac{\pnorm{1\to2}{\mtx{M}}^2 \log^{5} p}{n} \right)^{1/2}
		+ \frac{\norm{\mtx{M}} \log^3 p}{n} \right] \norm{ \covmtx }.
$$
The associated sample complexity bound is
\begin{equation} \label{eqn:lv-samp-comp}
n \geq \cnst{C} \left[ \frac{\pnorm{1\to2}{\mtx{M}}^2 \log^5 p}{\eps^2} + \frac{\norm{\mtx{M}} \log^3 p}{\eps}  \right].
\end{equation}
Our sample complexity bound~\eqref{eqn:masked-size} has a structure similar to~\eqref{eqn:lv-samp-comp},
but several improvements are worth mentioning.
First, the ratio of norms is a new feature in our estimate~\eqref{eqn:masked-size}.
%This represents a new phenomenon that previous authors have not quantified.
The second improvement over~\eqref{eqn:lv-samp-comp}, which has less
conceptual significance, is the reduction of the number of logarithmic factors.
Finally, our main result, Theorem~\ref{thm:main} covers all centered
distributions with four finite moments.

\subsection{Organization of the paper}

The rest of the paper is organized as follows.  Section~\ref{sec:preliminaries} introduces our notation and some preliminaries.  Section~\ref{sec:result-and-proof} presents the main result for zero-mean distributions
with finite fourth moments,
together with its proof and the proof of Theorem~\ref{thm:gaussian2}.  In Section~\ref{sec:moments}, we deal with the
technical estimates at the heart of the main result.  Appendix~\ref{app:matrix-concentration}
establishes the matrix concentration inequality we require.

\section{Preliminaries} \label{sec:preliminaries}

This section sets out the background material we need for the proof.
Section~\ref{subsec:notation} summarizes our notational conventions, and Section~\ref{subsec:hadamard} describes some basic properties of the Schur product.

\subsection{Notation and Conventions} \label{subsec:notation}

In this paper, we work exclusively with real numbers.  Plain italic letters always refer to scalars.  Bold italic lowercase letters, such as $\vct{a}$, refer to column vectors.  Bold italic uppercase letters, such as $\mtx{A}$, denote matrices.  All matrices in this work are square; the dimensions are determined by context.  We write $\zeromtx$ for the zero matrix and $\Id$ for the identity matrix.  The matrix unit $\mathbf{E}_{ij}$ has a unit entry in the $(i, j)$ position and zeros elsewhere.  

The symbol ${}^\adj$ denotes the transpose operation on vectors and matrices.  We use the term \term{self-adjoint} to refer to a matrix that satisfies $\mtx{A} = \mtx{A}^\adj$ to avoid confusion with symmetric random variables.  Curly inequalities refer to the positive-semidefinite partial ordering on self-adjoint matrices: $\mtx{A} \psdle \mtx{B}$ if and only if $\mtx{B} - \mtx{A}$ is positive semidefinite.

The function $\diag(\cdot)$ maps a vector $\vct{a}$ to a matrix whose diagonal entries correspond with the entries of $\vct{a}$.  When applied to a matrix, $\diag(\cdot)$ zeroes out the off-diagonal entries.
We write $\trace(\cdot)$ for the trace.  The symbol $\odot$ denotes the componentwise (i.e., Schur or Hadamard) product of two matrices.  

%In this paper, matrices are denoted by bold upper-case letters such as $\mtx{A}$, while (column) vectors are denoted by bold lower-case letters. The zero matrix is $\zeromtx$ and $\Id$ represents the identity matrix. The matrix $\mathbf{E}_{ij}$ has a one at the $(i,j)$ entry and zeros elsewhere. 

We write $\norm{\cdot}$ for both the $\ell_2$ vector norm and the associated operator norm, which is usually called the \term{spectral norm}.  The symbol $\pnorm{q}{\cdot}$ refers to the Schatten $q$-norm of a matrix:
$$
\pnorm{q}{\mtx{A}} := \left[ \trace \abs{\mtx{A}}^q \right]^{1/q}
%	= \left[ \trace (\mtx{A}^\adj \mtx{A})^{q/2} \right]^{1/q}.
$$
where $\abs{\mtx{A}} := (\mtx{A}^\adj \mtx{A})^{1/2}$.
The norm $\infnorm{\cdot}$ returns the maximum absolute entry of a vector, but
we use a separate notation $\pnorm{\max}{\cdot}$ for the maximum absolute entry
of a matrix.  We also require the norm
$$
\pnorm{1\to2}{\mtx{A}} := \max\nolimits_j \left( \sum\nolimits_i \abssq{ a_{ij} } \right)^{1/2}.
$$
The notation reflects the fact that this is the natural norm for linear maps from $\ell_1$ into $\ell_2$.

We reserve the symbol $\xi$ for a \term{Rademacher random variable}, which takes the two values $\pm 1$ with equal probability.  We also assume that all random variables are sufficiently regular that we are justified in computing expectations, interchanging limits, and so forth.

%A Rademacher random variable $\eps$ takes the value $1$ with probability $1/2$ and $-1$ otherwise. 

\subsection{Facts about the Schur Product} \label{subsec:hadamard}

The proof depends on some basic properties of Schur products.  The first result is a simple but useful algebraic identity.  For each square matrix $\mtx{A}$ and each conforming vector $\vct{x}$,
% and $\vct{y}$ with appropriate dimensions,
\begin{equation} \label{eqn:Hdm}
\mtx{A} \odot \vct{xx}^\adj = \diag(\vct{x}) \mtx{A} \diag(\vct{x}).
\end{equation}
The second result states that the Schur product with a positive-semidefinite matrix is order preserving.  That is, for a fixed positive-semidefinite matrix $\mtx{A}$,
\begin{equation} \label{eqn:hdm-monotone}
\mtx{B}_1 \psdle \mtx{B}_2 \quad \text{implies} \quad \mtx{A} \odot \mtx{B}_1 \psdle \mtx{A} \odot \mtx{B}_2.
\end{equation}
This property follows from Schur's theorem~\cite[Thm.~7.5.3]{HJ94}, which states that the Schur product of two positive-semidefinite matrices remains positive semidefinite.

\section{Masked Covariance Estimation} \label{sec:result-and-proof}

In this section, we state and prove detailed error estimates for masked covariance estimation
of a general distribution with finite fourth moments.
%Section~\ref{subsec:model} contains a short review of our model assumptions, and 
Section~\ref{subsec:moment-control} defines two concentration parameters that measure the spread of the distribution.  We present the main theorem and a short discussion in Sections~\ref{subsec:theorem} and~\ref{sec:discuss}.  In Section~\ref{subsec:gaussian}, we show how to derive Theorem~\ref{thm:gaussian2}, the result for Gaussian distributions.  The proof of the main result appears in Section~\ref{subsec:proof}.

\subsection{Concentration Parameters} \label{subsec:moment-control}

The effectiveness of the masked sample covariance estimator depends on the concentration properties of the distribution of $\vct{x}$.  Let us introduce two quantities that measure different facets of the variation of the random vector.

For $r \geq 1$, the \term{$r$th diagonal moment} $\mu_r(\vct{x})$ of the distribution is defined to be the maximum
$L_r$ norm of a single component of the vector:
\begin{equation} \label{eqn:muq}
\mu_r(\vct{x}) := \max\nolimits_i \left( \Expect \abs{ X_i }^r \right)^{1/r}.
\end{equation}
In other words, $\mu_r$ gives us uniform control on the $r$th moment of each component of $\vct{x}$.

We also require some information about the spread of the distribution in all directions.  Define the \term{uniform fourth moment} $\nu(\vct{x})$ by the formula
\begin{equation} \label{eqn:nu}
\nu( \vct{x} ) := \sup_{\norm{\vct{u}} = 1} \left( \Expect \abs{ \vct{u}^\adj \vct{x} }^4 \right)^{1/4}.
\end{equation}
The uniform fourth moment measures how much the worst marginal varies.

Note that both $\mu_r(\vct{x})$ and $\nu(\vct{x})$ have the same homogeneity as the random vector $\vct{x}$.  (This property is sometimes expressed by saying that the quantities have the same dimension, the same units, or the same scaling.)  As a consequence, the quantities $\mu_r(\vct{x}) \nu(\vct{x})$ and $\mu_r^2(\vct{x})$ have the same homogeneity as the covariance matrix $\covmtx$.
In the sequel, we abbreviate $\mu_r := \mu_r(\vct{x})$ and $\nu := \nu(\vct{x})$ whenever the distribution of the random vector $\vct{x}$ is clear.  

%\begin{remark}
%For Gaussian distributions, the uniform fourth moment $\nu$ always dominates the subgaussian coefficient $\kappa$.  In the worst case, $\nu$ can be much larger than $\kappa$.  Indeed, suppose that $X$ is a standard normal random variable, and consider the random vector $\vct{x} = (X, X, \dots, X)^* \in \R^p$.  Although the subgaussian coefficient $\kappa(\vct{x}) = \sqrt{2}$, the directional fourth moment $\nu(\vct{x}) = 12^{1/4} \sqrt{p}$.
%
%For other kinds of distributions, the subgaussian coefficient $\kappa$ may be substantially larger than the uniform fourth moment $\nu$.  Examples of this phenomenon already emerge in the univariate case.
%\end{remark}

\subsection{Main Result for Masked Covariance Estimation} \label{subsec:theorem}

%\notate{STILL NEED TO REWRITE 3.3}

The following theorem provides detailed information about the variance of the error in the masked sample covariance estimator for a zero-mean distribution with finite fourth moments.

%\notate{Need to rewrite 3.3 and 3.4}
%
%The following theorem, the main result of this paper, constrains the expected estimation error and characterizes the tail behavior of the error for zero-mean subgaussian distributions.

\begin{theorem}[The Masked Sample Covariance Estimator] \label{thm:main}
Fix a $p \times p$ symmetric mask matrix $\mtx{M}$, where $p \geq 3$.
Suppose that $\vct{x}$ is a random vector in $\R^p$ with mean zero.  Define the covariance matrix $\covmtx$ and the sample covariance matrix $\widehat{\covmtx}_n$ as in~\eqref{eqn:cov} and~\eqref{eqn:sample-cov}.  
Then the variance in the masked sample covariance estimator satisfies
\begin{multline} \label{eqn:theorem-main2}
\left[ \Expect \smnorm{}{\mtx{M} \odot \widehat{\covmtx}_n - \mtx{M} \odot \covmtx}^2 \right]^{1/2} \\
	\leq \sqrt{\frac{8\econst \log p}{n}} \cdot \pnorm{1\to 2}{ \mtx{M} } \cdot \mu_4 \nu 
	+ \frac{8 \econst \log p}{n} \cdot \norm{\mtx{M}} \cdot \left[ \Expect \max\nolimits_i \infnorm{\vct{x}_i}^4 \right]^{1/2}.
\end{multline}
Furthermore, the expected maximum satisfies the bound
\begin{equation} \label{eqn:thm-expect-max}
\left[ \Expect \max\nolimits_i \infnorm{\vct{x}_i}^4 \right]^{1/2}
	\leq \inf_{r \geq 1} \ (np)^{1/2r} \cdot \mu_{4r}^2.
\end{equation}
The diagonal moment $\mu_r$ and the uniform fourth moment $\nu$ are defined in~\eqref{eqn:muq} and \eqref{eqn:nu}.
\end{theorem}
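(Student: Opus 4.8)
The plan is to recognize the centered masked estimator as a normalized sum of independent, identically distributed, mean-zero self-adjoint random matrices and to invoke a matrix concentration inequality, after which only elementary estimates remain. Set $\mtx{Y}_i := \mtx{M} \odot ( \vct{x}_i \vct{x}_i^\adj - \covmtx )$ for $i = 1, \dots, n$; these matrices are independent and identically distributed, have mean $\zeromtx$, and are self-adjoint because the Schur product of two self-adjoint matrices is self-adjoint. Since $\mtx{M} \odot \widehat{\covmtx}_n - \mtx{M} \odot \covmtx = n^{-1} \sum_{i=1}^n \mtx{Y}_i$, a matrix Rosenthal inequality --- the matrix concentration inequality established in Appendix~\ref{app:matrix-concentration} --- bounds the left-hand side of~\eqref{eqn:theorem-main2} by a constant multiple of
\[
\frac{\sqrt{\log p}}{n} \cdot \norm{ \sum\nolimits_i \Expect \mtx{Y}_i^2 }^{1/2}
	\; + \; \frac{\log p}{n} \cdot \bigl[ \Expect \max\nolimits_i \norm{ \mtx{Y}_i }^2 \bigr]^{1/2} .
\]
Since the summands are i.i.d., $\norm{ \sum_i \Expect \mtx{Y}_i^2 } = n \, \norm{ \Expect \mtx{Y}_1^2 }$, so the proof of~\eqref{eqn:theorem-main2} reduces to estimating the two quantities $\norm{ \Expect \mtx{Y}_1^2 }$ and $\Expect \max_i \norm{ \mtx{Y}_i }^2$; the restriction $p \geq 3$ enters only through the matrix concentration inequality, and tracking the absolute constants accounts for the explicit factor $8 \econst$ in the statement.

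For the first quantity, expanding the square and using $\Expect ( \mtx{M} \odot \vct{x} \vct{x}^\adj ) = \mtx{M} \odot \covmtx$ gives
\[
\Expect \mtx{Y}_1^2 = \Expect ( \mtx{M} \odot \vct{x} \vct{x}^\adj )^2 - ( \mtx{M} \odot \covmtx )^2 \psdle \Expect ( \mtx{M} \odot \vct{x} \vct{x}^\adj )^2 ,
\]
so it suffices to bound the spectral norm of the last matrix. By the Schur identity~\eqref{eqn:Hdm}, $\mtx{M} \odot \vct{x} \vct{x}^\adj = \diag(\vct{x}) \, \mtx{M} \, \diag(\vct{x})$, so for a unit vector $\vct{u}$,
\[
\vct{u}^\adj \bigl[ \Expect ( \mtx{M} \odot \vct{x} \vct{x}^\adj )^2 \bigr] \vct{u}
	= \Expect \norm{ \diag(\vct{x}) \, \mtx{M} \, \diag(\vct{x}) \, \vct{u} }^2
	= \Expect \sum\nolimits_j X_j^2 \, ( \vct{w}_j^\adj \vct{x} )^2 ,
\]
where $\vct{w}_j$ is the $j$th row of $\mtx{M}$ reweighted entrywise by $\vct{u}$, so that $\norm{ \vct{w}_j }^2 = \sum_k m_{jk}^2 u_k^2$. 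Applying the Cauchy--Schwarz inequality termwise, together with the definitions~\eqref{eqn:muq} of $\mu_4$ and~\eqref{eqn:nu} of $\nu$,
\[
\Expect \bigl[ X_j^2 \, ( \vct{w}_j^\adj \vct{x} )^2 \bigr]
	\leq ( \Expect X_j^4 )^{1/2} \, \bigl( \Expect ( \vct{w}_j^\adj \vct{x} )^4 \bigr)^{1/2}
	\leq \mu_4^2 \cdot \nu^2 \norm{ \vct{w}_j }^2 ;
\]
summing over $j$ and using $\norm{ \vct{u} } = 1$ yields $\vct{u}^\adj [ \Expect ( \mtx{M} \odot \vct{x} \vct{x}^\adj )^2 ] \vct{u} \leq \mu_4^2 \nu^2 \sum_k u_k^2 \sum_j m_{jk}^2 \leq \pnorm{1\to2}{ \mtx{M} }^2 \mu_4^2 \nu^2$, and taking the supremum over $\vct{u}$ gives $\norm{ \Expect \mtx{Y}_1^2 } \leq \pnorm{1\to2}{ \mtx{M} }^2 \mu_4^2 \nu^2$. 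The particular Cauchy--Schwarz split chosen here --- one factor producing a column sum of $\mtx{M}$, and hence $\pnorm{1\to2}{ \mtx{M} }^2$, the other engaging the uniform fourth moment $\nu$ --- is the crux of the argument; a cruder estimate would produce an inferior Frobenius-norm dependence on the mask.

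For the second quantity, the triangle inequality and~\eqref{eqn:Hdm} give $\norm{ \mtx{Y}_i } \leq \norm{ \diag(\vct{x}_i) \, \mtx{M} \, \diag(\vct{x}_i) } + \norm{ \mtx{M} \odot \covmtx } \leq \norm{ \mtx{M} } \infnorm{ \vct{x}_i }^2 + \norm{ \mtx{M} \odot \covmtx }$, while Jensen's inequality applied to the spectral norm and then to the fourth power yields
\[
\norm{ \mtx{M} \odot \covmtx } = \norm{ \Expect ( \mtx{M} \odot \vct{x} \vct{x}^\adj ) }
	\leq \Expect \norm{ \diag(\vct{x}) \, \mtx{M} \, \diag(\vct{x}) }
	\leq \norm{ \mtx{M} } \, \Expect \infnorm{ \vct{x} }^2
	\leq \norm{ \mtx{M} } \, \bigl[ \Expect \max\nolimits_i \infnorm{ \vct{x}_i }^4 \bigr]^{1/2} .
\]
Hence the deterministic term is dominated by the fluctuating one, and an elementary $(a+b)^2 \leq 2a^2 + 2b^2$ step gives $[ \Expect \max_i \norm{ \mtx{Y}_i }^2 ]^{1/2} \leq 2 \, \norm{ \mtx{M} } \, [ \Expect \max_i \infnorm{ \vct{x}_i }^4 ]^{1/2}$. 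Substituting the two estimates into the bound of the first paragraph produces~\eqref{eqn:theorem-main2}.

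Finally, to prove~\eqref{eqn:thm-expect-max}, fix $r \geq 1$ and write $X_{ij}$ for the $j$th entry of $\vct{x}_i$. Convexity of $t \mapsto t^r$ on $[0,\infty)$ and Jensen's inequality give $\Expect \max_i \infnorm{ \vct{x}_i }^4 = \Expect \max_{i,j} X_{ij}^4 \leq ( \Expect \max_{i,j} X_{ij}^{4r} )^{1/r}$, and bounding the maximum of the $np$ nonnegative terms $X_{ij}^{4r}$ by their sum, followed by~\eqref{eqn:muq}, gives $\Expect \max_{i,j} X_{ij}^{4r} \leq \sum_{i,j} \Expect \abs{ X_{ij} }^{4r} \leq np \cdot \mu_{4r}^{4r}$. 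Taking square roots yields $[ \Expect \max_i \infnorm{ \vct{x}_i }^4 ]^{1/2} \leq ( np )^{1/2r} \mu_{4r}^2$, and the infimum over $r \geq 1$ gives~\eqref{eqn:thm-expect-max}. The main obstacle of the whole argument is the weak-variance estimate of the second paragraph --- which must deliver exactly the product $\pnorm{1\to2}{ \mtx{M} } \mu_4 \nu$ --- together with the matrix concentration inequality that it feeds (established separately in the appendix); the remaining steps are routine.
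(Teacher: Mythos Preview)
Your argument is essentially the paper's argument, and the variance estimate in your second paragraph is exactly Lemma~\ref{lem:variance} rephrased in quadratic-form language. There is one genuine technical slip, however, and fixing it also explains why the paper organizes the proof slightly differently.

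The concentration inequality in Appendix~\ref{app:matrix-concentration} (Theorem~\ref{thm:new-ros} / Theorem~\ref{thm:full-concentration}(2)) is stated for \emph{symmetric} random matrices, i.e.\ matrices whose distributions are invariant under sign change. Your summands $\mtx{Y}_i = \mtx{M}\odot(\vct{x}_i\vct{x}_i^\adj-\covmtx)$ are mean-zero and self-adjoint, but not symmetric in general, so you cannot invoke the appendix result as written. The paper closes this gap by symmetrizing \emph{before} applying the moment inequality: by the standard symmetrization lemma,
\[
\Expect\smnorm{}{\textstyle\sum_i(\mtx{M}\odot\vct{x}_i\vct{x}_i^\adj-\mtx{M}\odot\covmtx)}^2
\;\leq\; 4\,\Expect\smnorm{}{\textstyle\sum_i \xi_i\,(\mtx{M}\odot\vct{x}_i\vct{x}_i^\adj)}^2,
\]
and the new summands $\xi_i(\mtx{M}\odot\vct{x}_i\vct{x}_i^\adj)$ are genuinely symmetric. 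This step has a second payoff: the covariance $\covmtx$ has disappeared entirely, so the maximum-norm term is simply $\smnorm{}{\mtx{M}\odot\vct{x}_i\vct{x}_i^\adj}\leq\norm{\mtx{M}}\infnorm{\vct{x}_i}^2$ with no triangle-inequality detour and no extra factor of~$2$. With this ordering the constants fall out exactly as $8\econst$; your route, once patched with a symmetrization step, would produce $16\econst$ in the second term. Apart from this, your variance computation and your proof of~\eqref{eqn:thm-expect-max} match the paper's Lemmas~\ref{lem:variance} and~\ref{lem:max-value}.
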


In Section~\ref{sec:discuss}, we offer a short discussion of this result.  Afterward, in Section~\ref{subsec:gaussian}, we specialize the result to Gaussian distributions, which establishes Theorem~\ref{thm:gaussian2} of the Introduction.
The proof of Theorem~\ref{thm:main} appears below in Section~\ref{subsec:proof}.

\subsection{Discussion} \label{sec:discuss}

Theorem~\ref{thm:main} has a wider scope that most of the results in the literature on sparse covariance estimation.
Indeed, we allow completely general masks, and the bound is valid for any distribution with finite fourth moments.
When we specialize the result to the Gaussian case, we obtain an improvement over prior work~\cite[Thm.~2.1]{LV11}.
Even so, our argument, which is based on a matrix moment inequality, is very direct.

%Theorem~\ref{thm:main} exposes several phenomena in the behavior of the masked sample covariance estimator.  The first term on the right-hand side of~\eqref{eqn:theorem-main2} describes the scale on which moderate fluctuations occur.  It depends on the ``local'' complexity measure of the mask $\mtx{M}$, as well as the fourth moments of the distribution of $\vct{x}$.  The factor $n^{-1/2}$ is analogous with the scaling in the central limit theorem.  The second term on the right-hand side of~\eqref{eqn:theorem-main2} describes the scale for large fluctuations.  It depends on the ``global'' complexity measure of the mask, as well as the extreme value attained by the samples.
%When the sample size $n$ is large, we expect the first term to drive the bound because the second term usually decays faster.

For simplicity, we have presented Theorem~\ref{thm:main} as a bound on the variance of the masked sample covariance estimator.  A refinement of the same argument allows us to compute higher moments of the error, which in turn yield polynomial tail bounds via Markov's inequality.  When the distribution of the random vector $\vct{x}$ is subgaussian, this method even yields exponential tail bounds.

%We can also extend Theorem~\ref{thm:main} to the case where we center the observations with the sample mean before computing the sample covariance; the argument is similar to the one described by Levina and Vershynin~\cite[Rem.~4]{LV11} for the Gaussian case.

\begin{remark}[Alternative Arguments]
The proof of Theorem~\ref{thm:main} is based on a new matrix moment
inequality.  We can obtain similar results using other matrix concentration inequalities
that appear in the literature.
In particular, the matrix Rosenthal inequality~\cite[Cor.~7.5]{MJCFT12:Matrix-Concentration}
leads to a very similar bound.  The initial version of this
manuscript~\cite{CGT12:Masked-Sample-TR} uses the matrix
Bernstein inequality~\cite{Tropp11} to develop a version of Theorem~\ref{thm:main}
for a subgaussian random vector $\vct{x}$.
\end{remark}

\subsection{Specialization to Gaussian Distributions} \label{subsec:gaussian}

It is natural to apply Theorem~\ref{thm:main} to study the performance
of masked covariance estimation for a zero-mean Gaussian random vector.
In this case, the covariance matrix determines the distribution completely,
so we can obtain a more transparent statement that does not involve
the concentration parameters. %$\mu_r$ and $\nu$.
Theorem~\ref{thm:gaussian2} follows from these
considerations.

\vspace{6pt}

\begin{proof}[Proof of Theorem~\ref{thm:gaussian2} from Theorem~\ref{thm:main}]
First, we compute the $(2r)$th diagonal moment $\mu_{2r}(\vct{x})$ for $r \geq 1$.  Observe that the $i$th component $X_i$ of the vector $\vct{x}$ is a centered normal random variable with variance $\sigma_{ii}$, where $\sigma_{ii}$ denotes the $i$th diagonal entry of $\covmtx$.  Using the standard expression for the $(2r)$th moment of a normal random variable, we obtain
$$
\Expect \abs{X_i}^{2r} = \frac{(2r)!}{2^r \, r!} \cdot \sigma_{ii}^{r}
	\leq r^r \cdot \sigma_{ii}^{r}.
$$
This bound is valid for each real number $r \geq 1$.  Therefore, taking the $r$th root, we reach
\begin{equation} \label{eqn:gauss-2r}
\mu_{2r}^2 \leq r \cdot \max\nolimits_i \sigma_{ii}
	= r \pnorm{\max}{\covmtx}.
\end{equation}
The identity holds because the maximum entry of a positive-definite matrix occurs on its diagonal.
In particular, we see that
\begin{equation} \label{eqn:gauss-4}
\mu_4 \leq \sqrt{2} \pnorm{\max}{\covmtx}^{1/2}.
\end{equation}

Next, we instantiate the bound~\eqref{eqn:thm-expect-max} on the expected maximum.
Choose $4r = 2\log(np)$ to reach
\begin{equation} \label{eqn:gauss-max}
\left[ \Expect \max\nolimits_i \infnorm{\vct{x}_i}^4 \right]^{1/2}
	\leq \econst \, \mu_{2 \log(np)}^2
	\leq \econst \, \log(np) \cdot \pnorm{\max}{\covmtx},
%	\leq 2 \econst \, \log p \cdot \pnorm{\max}{\covmtx},
\end{equation}
owing to~\eqref{eqn:gauss-2r}.

Finally, we bound the uniform fourth moment $\nu(\vct{x})$.  Fix a unit vector $\vct{u}$.  The distribution of the marginal $\vct{u}^\adj \vct{x}$ is Gaussian with mean zero.  To compute the variance $\sigma_{\vct{u}}^2$ of the marginal, we write $\vct{x} = \covmtx^{1/2} \vct{g}$, where $\vct{g}$ is a standard Gaussian vector.  Then
$$
\sigma_{\vct{u}}^2
	= \Expect \abssq{ \vct{u}^\adj \vct{x} }
	= \Expect \vert \vct{u}^\adj (\covmtx^{1/2} \vct{g}) \vert^2
	= \vct{u}^\adj \covmtx^{1/2} (\Expect \vct{gg}^\adj) \covmtx^{1/2} \vct{u}
	= \vct{u}^\adj \covmtx \vct{u} \leq \norm{ \covmtx }.
$$
The fourth moment of a Gaussian variable equals three times its squared variance, so
$$
\Expect \abs{ \vct{u}^\adj \vct{x} }^4 = 3 \sigma_{\vct{u}}^4 \leq 3 \normsq{ \covmtx }.
$$
We conclude that the uniform fourth moment satisfies
\begin{equation} \label{eqn:gauss-nu}
\nu(\vct{x}) = \sup_{\norm{\vct{u}} = 1} ( \Expect \abs{ \vct{u}^\adj \vct{x} }^4 )^{1/4}
	\leq 3^{1/4} \norm{ \covmtx }^{1/2}.
\end{equation}

To complete the proof of Theorem~\ref{thm:gaussian2},
substitute the bounds~\eqref{eqn:gauss-4},~\eqref{eqn:gauss-max}, and~\eqref{eqn:gauss-nu} into
the inequality~\eqref{eqn:theorem-main2} from Theorem~\ref{thm:main}.
%The final result follows upon simplification.
\end{proof}

\subsection{Proof of Theorem \ref{thm:main}} \label{subsec:proof}

The proof of Theorem~\ref{thm:main} proceeds in several short steps.  First, we 
write the variance of the estimator as a sum of independent random matrices,
and we use symmetrization to simplify the expression.
Second, we apply a matrix moment inequality to bound the variance of the estimator
in terms of the spectral norm of a matrix variance and
the maximum spectral norm of the summands.
Finally, some short computations, which appear in
Section~\ref{sec:moments}, yield bounds for the
remaining terms.

\subsubsection{Symmetrization}

We begin by assigning a name to the quantity of interest:
$$
E := \Expect \smnorm{}{ \mtx{M} \odot \widehat{\covmtx}_n - \mtx{M} \odot \covmtx }.
$$
The random matrix inside the norm has a natural expression as a sum of independent, centered
random matrices.  To see why, substitute the definitions~\eqref{eqn:cov}
and~\eqref{eqn:sample-cov} of the population covariance matrix $\covmtx$ and the
sample covariance matrix $\widehat{\covmtx}_n$ to obtain
$$
E = \frac{1}{n} \cdot \Expect \norm{ \sum\nolimits_{i=1}^n
	(\mtx{M} \odot \vct{x}_i \vct{x}_i^\adj - \Expect \mtx{M} \odot \vct{x}_i \vct{x}_i^\adj ) }.
$$
The standard symmetrization method~\cite[Lem.~6.3]{LT91:Probability-Banach} yields the bound
\begin{equation} \label{eqn:E-symm}
E \leq \frac{2}{n} \cdot \Expect \norm{ \sum\nolimits_{i=1}^n
	\xi_i (\mtx{M} \odot \vct{x}_i \vct{x}_i^\adj) }.
\end{equation}
Here, $\{ \xi_i \}$ is a sequence of independent Rademacher random variables that is also independent
from the sequence $\{ \vct{x}_i \}$ of samples.  The advantage of the expression~\eqref{eqn:E-symm}
is that each Schur product involves a rank-one matrix, which greatly simplifies our computations.

\subsubsection{The Spectral Norm of an Independent Sum}

The main technical tool in this paper is a bound for the second moment of
the spectral norm of a sum of independent, symmetric random matrices.

%The values of the constants can be extracted with a little work.

\begin{theorem}[Matrix Second Moment Inequality] \label{thm:new-ros}
Assume that $p \geq 3$.
Consider a finite sequence $\{ \mtx{Y}_i \}$ of independent, symmetric, random, self-adjoint matrices
with dimension $p \times p$.  Then
$$
\left[ \Expect \normsq{ \sum\nolimits_i \mtx{Y}_i } \right]^{1/2}
	\leq \sqrt{2 \econst \log p} \cdot \norm{ \left[ \sum\nolimits_i \Expect \mtx{Y}_i^2 \right]^{1/2} }
	+  4\econst \log p \cdot \left[ \Expect \max\nolimits_i \norm{ \mtx{Y}_i }^2 \right]^{1/2}.
$$ 
\end{theorem}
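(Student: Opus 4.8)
The plan is to run the moment method: exploit the symmetry of the summands to reduce to a Rademacher sum of frozen matrices, invoke the matrix Khintchine inequality, control the resulting random ``variance proxy'' by a Rosenthal--Pinelis-type estimate, and finally optimize the moment exponent against the dimension $p$. First I would pass from the operator norm to a trace moment. Because each $\mtx{Y}_i$ has a symmetric distribution, $-\mtx{Y}_i$ has the same law as $\mtx{Y}_i$, so an independent Rademacher sequence $\{\xi_i\}$ can be inserted at no cost: $\sum\nolimits_i \mtx{Y}_i$ and $\sum\nolimits_i \xi_i \mtx{Y}_i$ have the same distribution. In particular every $\mtx{Y}_i$ is automatically centered and $\Expect(\xi_i \mtx{Y}_i)^2 = \Expect \mtx{Y}_i^2$, so the first term of the claimed bound really is built from the variances. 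For an integer $q \geq 1$, monotonicity of moments together with the elementary inequality $\norm{\mtx{A}}^{2q} \leq \trace(\mtx{A}^{2q})$ for self-adjoint $\mtx{A}$ gives
\[
\left[ \Expect \normsq{\sum\nolimits_i \mtx{Y}_i} \right]^{1/2} \leq \left[ \Expect \trace\Bigl( \bigl( \sum\nolimits_i \xi_i \mtx{Y}_i \bigr)^{2q} \Bigr) \right]^{1/2q}.
\]

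Next, I would condition on $\{\mtx{Y}_i\}$ and apply the matrix Khintchine inequality to the Rademacher sum of the now-fixed self-adjoint matrices $\mtx{Y}_i$; averaging over $\{\mtx{Y}_i\}$ afterwards reduces the task to estimating the $q$th trace moment of the random positive-semidefinite matrix $\mtx{W} := \sum\nolimits_i \mtx{Y}_i^2$, namely
\[
\Expect \trace\Bigl( \bigl( \sum\nolimits_i \xi_i \mtx{Y}_i \bigr)^{2q} \Bigr) \leq \frac{(2q)!}{2^q\, q!} \cdot \Expect \trace\bigl( \mtx{W}^q \bigr), \qquad \frac{(2q)!}{2^q\, q!} = (2q-1)!! \leq (2q-1)^q.
\]
If one prefers a self-contained treatment, this Khintchine step can itself be carried out by the combinatorial expansion used in the next step.

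The heart of the matter is a Rosenthal--Pinelis-type bound for $\Expect \trace(\mtx{W}^q)$ that cleanly separates the deterministic part $\mtx{V} := \sum\nolimits_i \Expect \mtx{Y}_i^2$ from a maximal term. I would expand $\trace(\mtx{W}^q)$ as a sum over index tuples $(i_1,\dots,i_q)$ of the traces $\trace(\mtx{Y}_{i_1}^2 \cdots \mtx{Y}_{i_q}^2)$. For tuples with pairwise distinct indices, independence lets the expectation factor, and summing over all such tuples contributes at most $\trace(\mtx{V}^q)$ with no loss. For a tuple with a repeated index, each coincidence is absorbed by the operator inequality $\mtx{Y}_a^4 \psdle \norm{\mtx{Y}_a}^2\, \mtx{Y}_a^2 \psdle \bigl( \max\nolimits_j \norm{\mtx{Y}_j}^2 \bigr)\, \mtx{Y}_a^2$, which trades two ``variance slots'' for one variance slot and one factor of $\max\nolimits_j \norm{\mtx{Y}_j}^2$, leaving a shorter product on which one recurses. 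Careful bookkeeping of these coincidences should lead to an estimate of the schematic form
\[
\Bigl[ \Expect \trace(\mtx{W}^q) \Bigr]^{1/q} \leq \trace(\mtx{V}^q)^{1/q} + \cnst{c}\, q \cdot \Expect\Bigl[ \max\nolimits_i \trace\bigl(\mtx{Y}_i^{2q}\bigr)^{1/q} \Bigr].
\]
I expect this to be the main obstacle. The delicate point --- and what distinguishes Theorem~\ref{thm:new-ros} from the bound one gets by substituting an off-the-shelf matrix Rosenthal inequality, as noted in the remark on alternative arguments --- is to be economical enough about how the maximal factor is pulled out that the estimate ends with only the \emph{second} moment $\bigl[ \Expect \max\nolimits_i \norm{\mtx{Y}_i}^2 \bigr]^{1/2}$ of the maximum, rather than with a moment of order $q \sim \log p$.

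Finally I would pass back to the operator norm and optimize. The bounds $\trace(\mtx{A}^q) \leq p\, \norm{\mtx{A}}^q$ for positive-semidefinite $\mtx{A}$ give $\trace(\mtx{V}^q)^{1/q} \leq p^{1/q}\, \norm{\mtx{V}^{1/2}}^2$ and $\trace(\mtx{Y}_i^{2q})^{1/q} \leq p^{1/q}\, \norm{\mtx{Y}_i}^2$; since the maximum commutes with $t \mapsto t^{1/q}$, the maximal term above is at most $p^{1/q}\, \Expect \max\nolimits_i \norm{\mtx{Y}_i}^2$. Combining with the Khintchine step, taking $2q$th roots, using $\sqrt{a+b} \leq \sqrt{a} + \sqrt{b}$, and using $\bigl( (2q-1)!! \bigr)^{1/2q} \leq \sqrt{2q-1}$, one arrives at
\[
\left[ \Expect \normsq{\sum\nolimits_i \mtx{Y}_i} \right]^{1/2} \leq \sqrt{2q-1}\; p^{1/2q}\, \norm{\mtx{V}^{1/2}} + \sqrt{(2q-1)\,\cnst{c}\, q}\; p^{1/2q}\, \Bigl[ \Expect \max\nolimits_i \norm{\mtx{Y}_i}^2 \Bigr]^{1/2}.
\]
Choosing the integer $q = \lceil \log p \rceil$ --- which is at least $1$ because $p \geq 3$ forces $\log p > 1$, and which makes $p^{1/2q} \leq \sqrt{\econst}$ and $\sqrt{2q-1} \asymp \sqrt{\log p}$ --- and simplifying the constants recovers the inequality in the statement.
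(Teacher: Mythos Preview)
Your architecture --- symmetrize, apply matrix Khintchine, bound the resulting positive-semidefinite variance proxy, choose the exponent against $p$ --- is the paper's as well, but the central step is handled very differently, and as written your proposal has a real gap there.

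You propose to control $[\Expect \trace(\mtx{W}^q)]^{1/q}$, with a single exponent $q \approx \log p$, by a direct Rosenthal--Pinelis estimate coming from the combinatorial expansion of $\trace(\mtx{W}^q)$. Several difficulties stand in the way. The ``no loss'' claim for distinct tuples is not automatic, since individual terms $\trace(\mtx{V}_{i_1} \cdots \mtx{V}_{i_q})$ in the expansion of $\trace(\mtx{V}^q)$ need not be nonnegative for $q \geq 3$ (take rank-one projectors onto unit vectors whose triple product of inner products is negative). The operator inequality $\mtx{Y}_a^4 \psdle \norm{\mtx{Y}_a}^2 \mtx{Y}_a^2$ treats only \emph{adjacent} repetitions inside a trace product; separated repeats need further trace inequalities you have not supplied. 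Most importantly, combinatorial arguments of this flavor naturally produce either $\sum_i \Expect \trace(\mtx{Y}_i^{2q})$ or a high moment $(\Expect \max_i \norm{\mtx{Y}_i}^{2q})^{1/q}$ in the maximal term; you correctly flag that reaching the \emph{first} moment $\Expect \max_i \norm{\mtx{Y}_i}^2$ is the crux, but you offer no mechanism for it.

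The paper's mechanism is a self-referential bootstrap, and the structural reason it succeeds is a \emph{decoupling of two exponents} that your scheme fuses into one. The paper works with the Schatten index $r = 2\log p$ (so $p^{1/r} \leq \sqrt{\econst}$ absorbs dimension) while keeping the probabilistic moment at the target $q = 2$. After Khintchine, one is left with $\bigl[\Expect \norm{\sum_i \mtx{W}_i}\bigr]^{1/2}$ for the positive-semidefinite matrices $\mtx{W}_i = \mtx{Y}_i^2$ --- a \emph{first} probabilistic moment. To bound it, the paper symmetrizes again, applies Khintchine once more (still at Schatten index $r$), and uses the pointwise inequality $\sum_i \mtx{W}_i^2 \psdle (\max_i \norm{\mtx{W}_i}) \sum_i \mtx{W}_i$ together with Cauchy--Schwarz. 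The quantity $E := \bigl[\Expect \norm{\sum_i \mtx{W}_i}\bigr]^{1/2}$ reappears on the right, giving a quadratic $E^2 \leq 2\sqrt{\econst r}\,(\Expect \max_i \norm{\mtx{W}_i})^{1/2}\, E + \norm{\mtx{V}}$, whence $E \leq \norm{\mtx{V}}^{1/2} + 2\sqrt{\econst r}\,(\Expect \max_i \norm{\mtx{Y}_i}^2)^{1/2}$. Because the probabilistic exponent here is $1$, only the first moment of $\max_i \norm{\mtx{W}_i} = \max_i \norm{\mtx{Y}_i}^2$ ever appears. This bootstrap is precisely the device that replaces your unproven Rosenthal--Pinelis step and delivers the second-moment maximal term you were aiming for.
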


Theorem~\ref{thm:new-ros} reduces the challenging problem of bounding the
second moment of the spectral norm to two simpler calculations.  We interpret
the first term as the variance of a sum of independent, symmetric random matrices.
The second term measures the typical size of the largest summand.
The result is new in the form that we present it,
but it has strong precedents in the literature.  See Appendix~\ref{app:matrix-concentration}
for the proof and a discussion of related work.

With Theorem~\ref{thm:new-ros} at hand,
it is straightforward to bound~\eqref{eqn:E-symm}.  We reach
\begin{align} \label{eqn:E-ros}
E &\leq \frac{1}{n} \sqrt{8\econst \log p} \norm{ \left[
	\sum\nolimits_i \Expect (\mtx{M} \odot \vct{x}_i \vct{x}_i^\adj)^2 \right]^{1/2} }
	+ \frac{8\econst \, \log p}{n} \left[ \Expect \max\nolimits_i
	\norm{ \mtx{M} \odot \vct{x}_i \vct{x}_i^\adj }^2 \right]^{1/2} \notag \\
	&= \sqrt{\frac{8\econst \, \log p}{n}} \norm{ \Expect (\mtx{M} \odot \vct{xx}^\adj)^2 }^{1/2}
	+ \frac{8 \econst \log p}{n} \left[ \Expect \max\nolimits_i
	\norm{ \mtx{M} \odot \vct{x}_i \vct{x}_i^\adj }^2 \right]^{1/2}.
\end{align}
The second line follows from the identical distribution of the summands.

\subsubsection{The Matrix Variance and the Maximum Spectral Norm}

All that remains is to calculate the matrix variance that appears in the
first term of~\eqref{eqn:E-ros} and the expected maximum norm that
appears in the second term.
%We postpone these calculations to
%Section~\ref{sec:moments} so we can complete the proof
%of Theorem~\ref{thm:main}.
%The estimate for the matrix variance is the main novelty in our proof.
Lemma~\ref{lem:variance} demonstrates that
\begin{equation} \label{eqn:second-moment-bd}
\Expect (\mtx{M} \odot \vct{xx}^\adj)^2
	\psdle \mu_4^2 \nu^2 \cdot \pnorm{1\to2}{\mtx{M}}^2 \cdot \Id.
\end{equation}
The concentration parameters $\mu_r$ and $\nu$ that characterize $\vct{x}$ are defined in~\eqref{eqn:muq} and~\eqref{eqn:nu}.  Lemma~\ref{lem:schur-norm}
provides a simple deterministic estimate for the remaining Schur product:
\begin{equation} \label{eqn:schur-deterministic}
\norm{ \mtx{M} \odot \vct{xx}^\adj }
	\leq \norm{ \mtx{M} } \infnorm{\vct{x}}^2.
\end{equation}
Introduce the matrix variance bound~\eqref{eqn:second-moment-bd} and the Schur product bound~\eqref{eqn:schur-deterministic} into~\eqref{eqn:E-ros} to obtain
\begin{equation} \label{eqn:E-bd1}
E \leq
	\sqrt{\frac{8\econst \log p}{n}} \cdot \pnorm{1\to 2}{ \mtx{M} } \cdot \mu_4 \nu 
	+ \frac{8 \econst \log p}{n} \cdot \norm{\mtx{M}} \cdot \left[ \Expect \max\nolimits_i \infnorm{\vct{x}_i}^4 \right]^{1/2}.
\end{equation}
To incorporate the semidefinite second moment bound, we have used the fact that the spectral norm is monotone with respect to the order $\psdle$ on the set of positive semidefinite matrices.  This is the first claim in Theorem~\ref{thm:main}.

To establish the remaining claim~\eqref{eqn:thm-expect-max}, we need a bound for the expected maximum of $\infnorm{\vct{x}_i}^4$.  Lemma~\ref{lem:max-value} states that
\begin{equation} \label{eqn:ext-val}
\Expect \max\nolimits_i \infnorm{\vct{x}_i}^4
	\leq \inf_{r \geq 1} \ (np)^{1/r} \cdot \mu_{4r}^4.
\end{equation}
This observation completes the proof.

\section{Computing the Matrix Variance and the Maximum Spectral Norm} \label{sec:moments}

In this section, we complete the calculations that stand at the
center of Theorem~\ref{thm:main}.

%
%  The first two results concern properties
%of the random matrix $\mtx{M} \odot \vct{xx}^\adj$.
%In Section~\ref{subsec:variance}, we develop a semidefinite
%bound on the second moment of this matrix.
%In Section~\ref{sec:schur-norm}, we obtain a simple deterministic
%bound on its norm.  Finally, in Section~\ref{sec:max-value},
%we obtain a bound for the expected maximum of a collection
%of random variables.

\subsection{A Semidefinite Bound for the Matrix Variance} \label{subsec:variance}

First, we study the matrix variance $\Expect( \mtx{M} \odot \vct{xx}^\adj)^2$.
This calculation requires some insight, and it is the main novelty in our proof.
The key idea is that the monotonicity~\eqref{eqn:hdm-monotone} of the Schur product allows us to replace one factor in the product by a scalar matrix.  This act of diagonalization simplifies the estimate tremendously because we erase the off-diagonal entries when we take the Schur product with an identity matrix.

\begin{lemma}[Matrix Variance Bound] \label{lem:variance}
Fix a self-adjoint $p \times p$ matrix $\mtx{M}$.  Let $\vct{x} = (X_1, \dots, X_p)^\adj$
be a random vector.  Then
\begin{equation*}
\Expect (\mtx{M}\odot \vct{xx}^\adj)^2 \psdle \mu_4^2 \nu^2 \pnorm{1\to2}{\mtx{M}}^2 \cdot \Id.
\end{equation*}
The concentration parameters $\mu_4$ and $\nu$ are defined in~\eqref{eqn:muq} and \eqref{eqn:nu}.
\end{lemma}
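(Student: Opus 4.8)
The plan is to verify the semidefinite estimate one test vector at a time: it suffices to show that $\vct{u}^{\adj} \bigl( \Expect (\mtx{M} \odot \vct{x}\vct{x}^{\adj})^2 \bigr) \vct{u} \le \mu_4^2 \nu^2 \pnorm{1\to2}{\mtx{M}}^2$ for every unit vector $\vct{u}$. The first move is the algebraic identity~\eqref{eqn:Hdm}, which writes $\mtx{M} \odot \vct{x}\vct{x}^{\adj} = \diag(\vct{x})\, \mtx{M}\, \diag(\vct{x})$; since $\mtx{M}$ is self-adjoint, so is this matrix, and therefore the quadratic form of its square collapses to a squared Euclidean norm,
$$
\vct{u}^{\adj} (\mtx{M} \odot \vct{x}\vct{x}^{\adj})^2 \vct{u} = \enormsq{ \diag(\vct{x})\, \mtx{M}\, \diag(\vct{x})\, \vct{u} }.
$$
This is the diagonalization idea mentioned above — testing against $\vct{u}$ and squaring erases all off-diagonal structure — and it reduces the lemma to estimating the expectation of a single nonnegative scalar.

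Next I would expand that Euclidean norm coordinatewise. Writing $\vct{m}_i$ for the $i$th column of $\mtx{M}$ and using $\diag(\vct{x})\,\vct{u} = \vct{u} \odot \vct{x}$, the $i$th coordinate of $\diag(\vct{x})\, \mtx{M}\, \diag(\vct{x})\, \vct{u}$ is $X_i \cdot \vct{m}_i^{\adj}(\vct{u} \odot \vct{x}) = X_i \cdot \vct{w}_i^{\adj}\vct{x}$, where $\vct{w}_i := \vct{u} \odot \vct{m}_i$. Hence
$$
\Expect \enormsq{ \diag(\vct{x})\, \mtx{M}\, \diag(\vct{x})\, \vct{u} } = \sum\nolimits_i \Expect\bigl[ X_i^2\, (\vct{w}_i^{\adj}\vct{x})^2 \bigr].
$$
Now the two concentration parameters enter for two genuinely different reasons: $X_i^2$ is the square of a single coordinate, controlled by the diagonal moment~\eqref{eqn:muq}, while $\vct{w}_i^{\adj}\vct{x}$ is a generic marginal of $\vct{x}$, controlled by the uniform fourth moment~\eqref{eqn:nu}. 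I would split each summand by the Cauchy--Schwarz inequality for the bilinear form $(U, V) \mapsto \Expect[UV]$ and then invoke~\eqref{eqn:muq} and~\eqref{eqn:nu} (the latter with the homogeneity of $\nu$ to extract $\enormsq{\vct{w}_i}$):
$$
\Expect\bigl[ X_i^2\, (\vct{w}_i^{\adj}\vct{x})^2 \bigr] \le \bigl( \Expect X_i^4 \bigr)^{1/2} \bigl( \Expect (\vct{w}_i^{\adj}\vct{x})^4 \bigr)^{1/2} \le \mu_4^2 \cdot \nu^2\, \enormsq{\vct{w}_i}.
$$

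The last step is to sum over $i$. Since $\enormsq{\vct{w}_i} = \sum_j u_j^2 m_{ij}^2$, interchanging the order of summation gives $\sum_i \enormsq{\vct{w}_i} = \sum_j u_j^2 \sum_i m_{ij}^2 \le \pnorm{1\to2}{\mtx{M}}^2 \enormsq{\vct{u}} = \pnorm{1\to2}{\mtx{M}}^2$, because the inner sum never exceeds the maximum squared column norm of $\mtx{M}$ and $\vct{u}$ is a unit vector. Combining the last three displays bounds the quadratic form by $\mu_4^2 \nu^2 \pnorm{1\to2}{\mtx{M}}^2$; since $\vct{u}$ was arbitrary, the stated semidefinite inequality follows.

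The one real decision in this argument is the Cauchy--Schwarz split. One has to recognize that the $i$th term should be torn asymmetrically — into a factor that sees a single coordinate direction, where $\mu_4$ is the natural constant, and a factor that sees an arbitrary direction, where $\nu$ is the natural constant — which is precisely what produces the mixed quantity $\mu_4^2 \nu^2$. A cruder estimate, such as dominating $\diag(\vct{x})^2$ by $\infnorm{\vct{x}}^2 \Id$ in the positive-semidefinite order before taking the expectation, would introduce an extraneous factor of $p$ and destroy the sharp dependence on both the mask and the distribution. Everything else — the identity~\eqref{eqn:Hdm}, the coordinatewise expansion, and the interchange of sums — is routine bookkeeping.
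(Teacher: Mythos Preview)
Your proof is correct and shares the same skeleton as the paper's: the identity~\eqref{eqn:Hdm}, a decomposition indexed by the coordinate $i$, the same Cauchy--Schwarz split that produces the factor $\mu_4^2 \nu^2$, and the final identification of $\pnorm{1\to2}{\mtx{M}}^2$ as the maximum diagonal entry of $\mtx{M}^2$. The one genuine difference is that the paper stays at the matrix level throughout: it rewrites $(\mtx{M} \odot \vct{xx}^\adj)^2 = \sum_i (\mtx{M}\mathbf{E}_{ii}\mtx{M}) \odot (X_i^2\, \vct{xx}^\adj)$ and then uses the monotonicity of the Schur product~\eqref{eqn:hdm-monotone} to replace $\Expect(X_i^2\, \vct{xx}^\adj)$ by $\lambda_{\max}\bigl(\Expect(X_i^2\, \vct{xx}^\adj)\bigr)\cdot \Id$, landing directly on the diagonal bound $\mu_4^2 \nu^2 \cdot \diag(\mtx{M}^2)$. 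Your route is slightly more elementary, since testing against a fixed unit vector $\vct{u}$ and expanding coordinatewise sidesteps Schur's theorem entirely; the paper's matrix-level argument, on the other hand, makes the ``diagonalization via Schur product'' idea more visibly structural and never needs to name the auxiliary vectors $\vct{w}_i$.
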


\begin{proof}
To begin, we perform some algebraic manipulations to consolidate the randomness.  The Schur product identity~\eqref{eqn:Hdm} implies that
\begin{align*}
( \mtx{M}\odot \vct{xx}^\adj)^2
	&= (\diag(\vct{x}) \mtx{M} \diag(\vct{x}))^2 \\
	&= \diag(\vct{x}) (\mtx{M} \diag(\vct{x})^2 \mtx{M}) \diag(\vct{x})
	= (\mtx{M} \diag(\vct{x})^2 \mtx{M}) \odot \vct{xx}^\adj.
\end{align*}
Rewrite the diagonal matrix as a linear combination of matrix units:
$
\diag(\vct{x})^2 = \sum\nolimits_{i} X_i^2 \, \mathbf{E}_{ii}.
$
The bilinearity of the Schur product now yields
$$
( \mtx{M}\odot \vct{xx}^\adj)^2
	= \left[ \mtx{M} \left( \sum\nolimits_i X_i^2 \, \mathbf{E}_{ii} \right) \mtx{M} \right] \odot \vct{xx}^\adj
	= \sum\nolimits_i (\mtx{M} \mathbf{E}_{ii} \mtx{M}) \odot (X_i^2 \, \vct{xx}^\adj).
$$
%The latter calculation depends heavily on the bilinearity of the Schur product.
Take the expectation of this expression to reach
\begin{equation} \label{eqn:variance-2}
\Expect ( \mtx{M}\odot \vct{xx}^\adj)^2
	= \sum\nolimits_i (\mtx{M} \mathbf{E}_{ii} \mtx{M}) \odot [\Expect (X_i^2 \, \vct{xx}^\adj)].
\end{equation}
%Note that we have sequestered all of the randomness in the right-hand factor of each Schur product, so the matrix $\mtx{M}$ does not participate in the expectation.

Next, we invoke the monotonicity~\eqref{eqn:hdm-monotone} of the Schur product to make a diagonal estimate for each summand in~\eqref{eqn:variance-2}:
$$
(\mtx{M} \mathbf{E}_{ii} \mtx{M}) \odot [ \Expect (X_i^2 \vct{x} \vct{x}^*) ]
	\psdle \lambda_{\max}(\Expect (X_i^2 \vct{x} \vct{x}^*)) \cdot (\mtx{M} \mathbf{E}_{ii} \mtx{M}) \odot \Id.
$$
The Rayleigh--Ritz variational formula \cite[Cor.~III.1.2]{Bha97:Matrix-Analysis} allows us to write the maximum eigenvalue as a supremum.  Thus,
\begin{align*}
\lambda_{\max}( \Expect (X_i^2 \vct{x} \vct{x}^*) )
	&= \sup_{\norm{\vct{u}} = 1} \vct{u}^\adj \, [\Expect (X_i^2 \, \vct{xx}^\adj) ] \, \vct{u}
	= \sup_{\norm{\vct{u}} = 1} \Expect \big[  X_i^2 \, \abssq{\vct{u}^\adj \vct{x}} \big] \\
	&\leq \sup_{\norm{\vct{u}} = 1} (\Expect X_i^4)^{1/2} \, (\Expect \abs{\vct{u}^\adj \vct{x}}^4)^{1/2}
%	\leq 2 \kappa(X_i)^2 \sup_{\norm{\vct{u}} = 1} (\Expect \abs{\vct{u}^\adj \vct{x}}^4)^{1/2}
	\leq \mu_4^2 \nu^2.
\end{align*}
The first inequality is Cauchy--Schwarz.  The final inequality follows from the definitions~\eqref{eqn:muq} and~\eqref{eqn:nu} of the concentration parameters.  Combine the last two displays to obtain
\begin{equation} \label{eqn:variance-3}
(\mtx{M} \mathbf{E}_{ii} \mtx{M}) \odot [ \Expect (X_i^2 \vct{x} \vct{x}^*) ]
	\psdle \mu_4^2 \nu^2 \cdot (\mtx{M} \mathbf{E}_{ii} \mtx{M}) \odot \Id.
\end{equation}

To complete our bound for the variance, we introduce~\eqref{eqn:variance-3} into~\eqref{eqn:variance-2}, which delivers
$$
\Expect ( \mtx{M}\odot \vct{xx}^\adj)^2
	\psdle \mu_4^2 \nu^2 \cdot \mtx{M}^2 \odot \Id
	= \mu_4^2 \nu^2 \cdot \diag(\mtx{M}^2)
$$
We can control a positive-semidefinite diagonal matrix using only its maximum entry:
$$
\Expect ( \mtx{M}\odot \vct{xx}^\adj)^2
	\psdle \mu_4^2 \nu^2 \cdot \max\nolimits_i (\mtx{M}^2)_{ii} \cdot \Id
	= \mu_4^2 \nu^2 \pnorm{1\to2}{ \mtx{M} }^2 \cdot \Id
$$
The second relation follows from the fact that the diagonal entries of $\mtx{M}^2$ list the squared $\ell_2$ norms of the columns of $\mtx{M}$, while $\pnorm{1\to2}{\mtx{M}}$ computes the maximum $\ell_2$ norm of a column of $\mtx{M}$.
\end{proof}

\subsection{Norm Bound for a Schur Product} \label{sec:schur-norm}

Next, we present a simple norm bound for the Schur product $\mtx{M} \odot \vct{xx}^\adj$.

\begin{lemma}[Norm Bound for a Schur Product] \label{lem:schur-norm}
Let $\mtx{M}$ be a $p \times p$ self-adjoint matrix, and let $\vct{x}$
be a vector in $\R^p$.  Then 
\begin{equation*}
\norm{ \mtx{M} \odot \vct{xx}^\adj }
	\leq \norm{M} \infnorm{\vct{x}}^2.
\end{equation*}
\end{lemma}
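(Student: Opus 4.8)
The plan is to use the Schur product identity~\eqref{eqn:Hdm} to rewrite $\mtx{M} \odot \vct{xx}^\adj$ as a conjugation of $\mtx{M}$ by the diagonal matrix $\diag(\vct{x})$, and then estimate the spectral norm of that product submultiplicatively. Concretely, by~\eqref{eqn:Hdm} we have $\mtx{M} \odot \vct{xx}^\adj = \diag(\vct{x}) \, \mtx{M} \, \diag(\vct{x})$. Taking spectral norms and using submultiplicativity,
$$
\norm{ \mtx{M} \odot \vct{xx}^\adj } = \norm{ \diag(\vct{x}) \, \mtx{M} \, \diag(\vct{x}) } \leq \norm{ \diag(\vct{x}) }^2 \norm{ \mtx{M} }.
$$
The remaining step is to identify $\norm{ \diag(\vct{x}) }$: the spectral norm of a diagonal matrix is the largest absolute value among its diagonal entries, which is exactly $\infnorm{\vct{x}}$ by definition of the $\ell_\infty$ norm. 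Substituting gives $\norm{ \mtx{M} \odot \vct{xx}^\adj } \leq \norm{ \mtx{M} } \infnorm{\vct{x}}^2$, as claimed.

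There is essentially no obstacle here — the statement is a short deterministic fact, and the only genuine input is the algebraic identity~\eqref{eqn:Hdm}, which is already recorded in Section~\ref{subsec:hadamard}. If one wanted to avoid invoking that identity, an equally quick alternative is to note that $\mtx{M} \odot \vct{xx}^\adj$ has $(i,j)$ entry $m_{ij} x_i x_j$, so for any unit vector $\vct{u}$ one has $\vct{u}^\adj (\mtx{M} \odot \vct{xx}^\adj) \vct{u} = \vct{v}^\adj \mtx{M} \vct{v}$ where $\vct{v} := \diag(\vct{x}) \vct{u}$, and $\norm{\vct{v}} \leq \infnorm{\vct{x}} \norm{\vct{u}} = \infnorm{\vct{x}}$; since $\mtx{M}$ is self-adjoint, $|\vct{v}^\adj \mtx{M} \vct{v}| \leq \norm{\mtx{M}} \norm{\vct{v}}^2 \leq \norm{\mtx{M}} \infnorm{\vct{x}}^2$, and taking the supremum over $\vct{u}$ finishes the argument. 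Either route is a one-line computation; the first is cleaner given the machinery already in place.
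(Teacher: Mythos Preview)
Your proof is correct and follows exactly the paper's argument: invoke the identity~\eqref{eqn:Hdm} to write $\mtx{M} \odot \vct{xx}^\adj = \diag(\vct{x})\,\mtx{M}\,\diag(\vct{x})$, apply submultiplicativity of the spectral norm, and use $\norm{\diag(\vct{x})} = \infnorm{\vct{x}}$. The alternative variational argument you sketch is also fine, but the first route is precisely what the paper does.
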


\begin{proof}
The Hadamard product identity~\eqref{eqn:Hdm} yields
$$
\norm{ \mtx{M} \odot \vct{xx}^\adj }
	= \norm{ \diag(\vct{x}) \mtx{M} \diag(\vct{x}) }
	\leq \norm{ \diag(\vct{x})} \norm{ \mtx{M} } \norm{\diag(\vct{x}) }
	= \norm{ \mtx{M} } \infnorm{\vct{x}}^2.
$$
The inequality follows from the submultiplicativity of the spectral norm.
\end{proof}

\subsection{The Expected Maximum Entry among the Sample Vectors} \label{sec:max-value}

Finally, we develop a basic estimate on the expected maximum entry that appears in any of the sample vectors.

\begin{lemma}[The Expected Maximum] \label{lem:max-value}
Consider an i.i.d.~sequence $\{\vct{x}_i\}_{i=1}^n$ of random vectors in $\R^p$.  Then
$$
\left[ \Expect \max\nolimits_i \infnorm{\vct{x}_i}^4 \right]^{1/4}
	\leq \inf_{r \geq 1} \ (np)^{1/4r} \cdot \mu_{4r} .
$$
The diagonal moment parameter $\mu_{4r}$ is defined in~\eqref{eqn:muq}.
\end{lemma}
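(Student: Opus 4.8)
The plan is to fix an exponent, dominate the maximum by a sum once everything is raised to that power, control each resulting moment with the diagonal moment parameter $\mu_{4r}$, and finally optimize over the exponent.

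Concretely, I would fix a real number $r \geq 1$ and write $(\vct{x}_i)_j$ for the $j$th component of the sample vector $\vct{x}_i$. The first step is the pointwise estimate
\[
\max\nolimits_i \infnorm{\vct{x}_i}^{4r} = \max\nolimits_{i,j} \abs{(\vct{x}_i)_j}^{4r} \leq \sum\nolimits_{i=1}^n \sum\nolimits_{j=1}^p \abs{(\vct{x}_i)_j}^{4r}.
\]
Taking expectations and using that the $\vct{x}_i$ are i.i.d.\ copies of $\vct{x}$, the definition~\eqref{eqn:muq} of the $(4r)$th diagonal moment gives $\Expect \abs{(\vct{x}_i)_j}^{4r} \leq \mu_{4r}^{4r}$ for every $i$ and $j$, whence $\Expect \max\nolimits_i \infnorm{\vct{x}_i}^{4r} \leq np \cdot \mu_{4r}^{4r}$.

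Next I would transfer this bound on the $(4r)$th moment back to the fourth moment appearing in the statement. Since $r \geq 1$, the map $t \mapsto t^{1/r}$ is concave on $\Rplus$, so Jensen's inequality yields
\[
\Expect \max\nolimits_i \infnorm{\vct{x}_i}^{4} = \Expect \bigl( \max\nolimits_i \infnorm{\vct{x}_i}^{4r} \bigr)^{1/r} \leq \bigl( \Expect \max\nolimits_i \infnorm{\vct{x}_i}^{4r} \bigr)^{1/r} \leq (np)^{1/r} \mu_{4r}^{4}.
\]
Taking fourth roots gives $\bigl[ \Expect \max\nolimits_i \infnorm{\vct{x}_i}^4 \bigr]^{1/4} \leq (np)^{1/4r} \mu_{4r}$, and since $r \geq 1$ was arbitrary I would conclude by passing to the infimum over $r$.

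The argument is entirely elementary, so I do not expect a genuine obstacle; the only points that need care are that Jensen's inequality is applied in the concave direction (which is exactly why the hypothesis $r \geq 1$ is imposed), and that the homogeneity of the exponents is tracked correctly, so that $\mu_{4r}$ is not confused with a power of $\mu_4$. Squaring the resulting inequality recovers the bound~\eqref{eqn:thm-expect-max} used in Theorem~\ref{thm:main}.
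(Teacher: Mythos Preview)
Your proposal is correct and follows essentially the same approach as the paper: bound the maximum by the double sum over $i$ and $j$, use the definition of $\mu_{4r}$ to control each term, and invoke Jensen's inequality to pass between the $4$th and $(4r)$th moments. The only cosmetic difference is the order of operations---the paper applies Jensen first and then replaces the max by the sum, whereas you do the reverse---but the content is identical.
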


\begin{proof}
For any $r \geq 1$, Jensen's inequality yields
\begin{equation*}
\left[ \Expect \max\nolimits_i \infnorm{\vct{x}_i}^4 \right]^{1/4}
	\leq \left[ \Expect \max\nolimits_i \infnorm{\vct{x}_i}^{4r} \right]^{1/4r}
	\leq \left[ \sum_{i=1}^n \sum_{j=1}^p \Expect \abs{X_{ij}}^{4r} \right]^{1/4r}
	\leq (np)^{1/4r} \mu_{4r}. 
\end{equation*}
We have written $X_{ij}$ for the $j$th entry of $\vct{x}_i$ and invoked the definition of the diagonal
moment $\mu_{4r}$.
\end{proof}

\appendix

\section{The Matrix Moment Inequality}
\label{app:matrix-concentration}

This appendix contains a proof of Theorem~\ref{thm:new-ros}, the matrix moment inequality that animates our argument.  It costs us no additional energy to prove a result that holds for all moments.

\begin{theorem}[Matrix Moment Inequality] \label{thm:full-concentration}
Assume that $p \geq 3$.

\begin{enumerate}
\item	Suppose that $q \geq 1$, and fix $r \geq \max\{q, 2\log p\}$.  Consider a finite sequence $\{ \mtx{W}_i \}$ of independent, random, positive-semidefinite matrices with dimension $p \times p$.  Then
\begin{equation} \label{eqn:ros-psd}
\left[ \Expect  \norm{ \sum\nolimits_i \mtx{W}_i }^q \right]^{1/q}
	\leq \left[ \norm{ \sum\nolimits_i \Expect \mtx{W}_i }^{1/2}
	+ 2\sqrt{\econst \, r} \left( \Expect \max\nolimits_i \norm{\mtx{W}_i}^q \right)^{1/2q}
	\right]^{2}
\end{equation}

\item	Suppose that $q \geq 2$, and fix $r \geq \max\{q, 2 \log p \}$.  Consider a finite sequence $\{ \mtx{Y}_i \}$ of independent, symmetric, random, self-adjoint matrices with dimension $p \times p$.  Then
\begin{equation} \label{eqn:ros-gen}
\left[ \Expect  \norm{ \sum\nolimits_i \mtx{Y}_i }^q \right]^{1/q}
	\leq \sqrt{\econst \, r} \norm{ \left( \sum\nolimits_i \Expect \mtx{Y}_i^2 \right)^{1/2} }
	+ 2\econst \, r \left( \Expect \max\nolimits_i \norm{\mtx{Y}_i}^{q} \right)^{1/q}.
\end{equation}
\end{enumerate}
\end{theorem}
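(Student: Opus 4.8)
The plan is to derive both estimates from the non-commutative Khintchine inequality, packaged with two elementary devices: a passage between the spectral norm and a high-order Schatten norm, and a self-bounding (``fixed-point'') argument. Write $\pnorm{r}{\cdot}$ for the Schatten $r$-norm. I would use repeatedly the bounds $\norm{\mtx{A}} \le \pnorm{r}{\mtx{A}} \le p^{1/r}\norm{\mtx{A}}$ for a $p\times p$ matrix $\mtx{A}$; the hypothesis $r \ge 2\log p$ is precisely the condition that makes $p^{1/r} \le \sqrt{\econst}$, so up to a harmless factor $\sqrt{\econst}$ one may trade the spectral norm for $\pnorm{r}{\cdot}$ whenever convenient. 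The decisive bookkeeping point is to bound $\norm{\sum_i \mtx{Y}_i}$ by $\pnorm{r}{\sum_i \mtx{Y}_i}$ \emph{without} inflating the outer probability exponent $q$; this is what lets the large-summand term in the conclusion retain the exponent $q$ rather than degrading to exponent $r$.

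I would prove the positive-semidefinite assertion~\eqref{eqn:ros-psd} first, because it drives the general case. Set $S = \sum_i \mtx{W}_i$, $M = (\Expect\norm{S}^q)^{1/q}$, and $L = (\Expect\max_i\norm{\mtx{W}_i}^q)^{1/q}$. Write $S = \Expect S + (S - \Expect S)$, use subadditivity of the spectral norm, and symmetrize the centered part against an independent copy of the summands; this reduces matters to the Rademacher sum $\sum_i \xi_i \mtx{W}_i$ with only the factor $2$ lost and no cross-terms. Conditioning on $\{\mtx{W}_i\}$ and invoking the matrix Khintchine inequality in Schatten $r$-norm, then passing back to the spectral norm via $p^{1/r}\le\sqrt{\econst}$ (together with Jensen's inequality to move from $L^q$ outside to $L^r$ inside the conditional expectation), bounds $(\Expect\norm{S-\Expect S}^q)^{1/q}$ by $2\sqrt{\econst(r-1)}\,(\Expect\norm{\sum_i\mtx{W}_i^2}^{q/2})^{1/q}$. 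Now the semidefinite chain $\mtx{W}_i^2 \psdle \norm{\mtx{W}_i}\,\mtx{W}_i \psdle (\max_j\norm{\mtx{W}_j})\,\mtx{W}_i$ gives $\norm{\sum_i\mtx{W}_i^2} \le (\max_j\norm{\mtx{W}_j})\,\norm{S}$, and a single Cauchy--Schwarz step splits the expectation to yield $M \le \norm{\sum_i\Expect\mtx{W}_i} + 2\sqrt{\econst(r-1)}\,L^{1/2}M^{1/2}$. This is a quadratic inequality in $M^{1/2}$; solving it (using $\sqrt{x+y}\le\sqrt x+\sqrt y$) gives $M \le \bigl[\norm{\sum_i\Expect\mtx{W}_i}^{1/2} + 2\sqrt{\econst r}\,(\Expect\max_i\norm{\mtx{W}_i}^q)^{1/2q}\bigr]^2$, which is~\eqref{eqn:ros-psd}. (Alternatively~\eqref{eqn:ros-psd} may be quoted, being a form of the matrix Rosenthal inequality for positive-semidefinite summands.)

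Next I would deduce the general symmetric case~\eqref{eqn:ros-gen} from~\eqref{eqn:ros-psd}. Since each $\mtx{Y}_i$ is symmetrically distributed and they are independent, $\sum_i\mtx{Y}_i$ has \emph{the same} distribution as $\sum_i\xi_i\mtx{Y}_i$, so symmetrization is free of any constant. Bound $\norm{\sum_i\mtx{Y}_i}$ by $\pnorm{r}{\sum_i\xi_i\mtx{Y}_i}$, condition on $\{\mtx{Y}_i\}$, apply matrix Khintchine, and convert back to the spectral norm: this gives $(\Expect\norm{\sum_i\mtx{Y}_i}^q)^{1/q} \le \sqrt{\econst(r-1)}\,(\Expect\norm{\sum_i\mtx{Y}_i^2}^{q/2})^{1/q}$. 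Now $\sum_i\mtx{Y}_i^2$ is a sum of independent positive-semidefinite matrices, so I invoke~\eqref{eqn:ros-psd} for it \emph{at exponent $q/2$} (legitimate since $q \ge 2$, with parameter $r' = \max\{q/2,2\log p\} \le r$). The key observation is that the large-summand term produced is $(\Expect\max_i\norm{\mtx{Y}_i^2}^{q/2})^{1/q} = (\Expect\max_i\norm{\mtx{Y}_i}^q)^{1/q}$, carrying precisely the exponent $q$ the statement demands. Taking the square root of the squared bracket in~\eqref{eqn:ros-psd} and collecting the prefactors $\sqrt{\econst(r-1)}$ and $2\sqrt{\econst r'}$, then tidying with $r-1 \le r$ and $r' \le r$, produces the two terms $\sqrt{\econst r}\,\norm{(\sum_i\Expect\mtx{Y}_i^2)^{1/2}}$ and $2\econst r\,(\Expect\max_i\norm{\mtx{Y}_i}^q)^{1/q}$, which is~\eqref{eqn:ros-gen}.

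The step I expect to be the genuine obstacle is the matrix Khintchine inequality itself, $(\Expect\pnorm{r}{\sum_i\xi_i\mtx{A}_i}^r)^{1/r} \le \sqrt{r-1}\,\pnorm{r}{(\sum_i\mtx{A}_i^2)^{1/2}}$ with the sharp constant $\sqrt{r-1}$: establishing it from first principles requires either a careful even-moment trace expansion organized over the perfect matchings of the index multiset, or an appeal to the non-commutative Khintchine inequality of Lust-Piquard and Pisier, extended to non-integer $r$ by interpolation. By contrast, the two packaging steps — the spectral/Schatten trade governed by $p^{1/r}\le\sqrt{\econst}$, and the self-bounding argument — are routine, and the modest slack in the constants ($\sqrt{r-1} < \sqrt r$, $r' \le r$, $r-1 \le r$) is exactly what lets the final bounds emerge with the clean prefactors $\sqrt{\econst r}$ and $2\econst r$.
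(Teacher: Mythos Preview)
Your proposal is correct and follows essentially the same route as the paper: center and symmetrize, pass to the Schatten $r$-norm (using $p^{1/r}\le\sqrt{\econst}$), apply matrix Khintchine, use the semidefinite bound $\mtx{W}_i^2\psdle\norm{\mtx{W}_i}\mtx{W}_i$ with Cauchy--Schwarz to reach a self-bounding quadratic for Part~I, and then feed $\mtx{W}_i=\mtx{Y}_i^2$ at exponent $q/2$ back into Part~I to obtain Part~II. The only cosmetic differences are that the paper quotes Khintchine with constant $\sqrt{r}$ rather than $\sqrt{r-1}$, and it does not explicitly track the smaller parameter $r'=\max\{q/2,2\log p\}$ in the second application.
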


Theorem~\ref{thm:new-ros} follows from~\eqref{eqn:ros-gen} when we select $q = 2$ and $r = 2\log p$.  We establish Theorem~\ref{thm:full-concentration} below after we provide some comments, preliminary results, and historical background.

Theorem~\ref{thm:full-concentration} shows that the moments of a spectral norm of a sum are controlled by two different quantities.  The first term in~\eqref{eqn:ros-psd} is a matrix mean, while the first term in~\eqref{eqn:ros-gen} is a matrix variance.  These terms reflect the size of moderate deviations, and they depend only weakly on the order $q$ of the moment.  The second term measures the size of the largest summand, and it controls the large deviation behavior of the sum.  These bounds are related to the matrix Rosenthal inequality~\cite{JZ11:Noncommutative-Bennett,MJCFT12:Matrix-Concentration}, and they can be viewed as the moment inequality underlying the matrix Bernstein inequality~\cite[Thm.~1.4]{Tropp11}.

\subsection{Matrix Khintchine Inequality}

The main ingredient in the proof of Theorem~\ref{thm:full-concentration} is the matrix Khintchine inequality.
To our knowledge, this result is the earliest matrix moment inequality.

\begin{proposition}[Matrix Khintchine Inequality] \label{prop:matrix-khintchine}
Suppose that $r \geq 2$.  Consider a finite sequence $\{ \mtx{A}_i \}$ of deterministic, self-adjoint matrices.
Then
$$
\left[ \Expect \pnorm{r}{ \sum\nolimits_i \xi_i \mtx{A}_i }^r \right]^{1/r}
	\leq \sqrt{r} \pnorm{r}{ \left[ \sum\nolimits_i \mtx{A}_i^2 \right]^{1/2} }.
$$
The sequence $\{ \xi_i \}$ consists of independent Rademacher random variables.
\end{proposition}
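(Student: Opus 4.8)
The plan is to establish the inequality first for even integer exponents $r = 2m$, where the left-hand side reduces to a polynomial-moment computation, and then to bootstrap to all real $r \ge 2$ by interpolation. Write $\mtx{Y} := \sum_i \xi_i \mtx{A}_i$ and $\mtx{S} := \sum_i \mtx{A}_i^2$. Since $\mtx{Y}$ is self-adjoint, $\pnorm{2m}{\mtx{Y}}^{2m} = \trace \abs{\mtx{Y}}^{2m} = \trace \mtx{Y}^{2m}$, so expanding the $2m$-fold product and using that the $\xi_i$ are independent with mean zero, every multi-index $(i_1,\dots,i_{2m})$ in which some value appears an odd number of times drops out. What survives is $\Expect \trace \mtx{Y}^{2m} = \sum \trace(\mtx{A}_{i_1}\cdots\mtx{A}_{i_{2m}})$, the sum running over those multi-indices in which each distinct value occurs an even number of times.

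First I would organize this sum according to the partition of $\{1,\dots,2m\}$ that records which positions carry a common value; every block of such a partition has even size. For the $(2m-1)!!$ partitions into blocks of size two, repeated use of the cyclicity of the trace together with the Cauchy--Schwarz inequality for the trace inner product, $\abs{\trace(\mtx{B}\mtx{C})} \le \pnorm{2}{\mtx{B}}\,\pnorm{2}{\mtx{C}}$, collapses each term into a quantity bounded by $\trace \mtx{S}^m$; partitions with a block of size four or more are handled by the same mechanism and contribute no more. Summing gives $\Expect \trace \mtx{Y}^{2m} \le (2m-1)!!\,\trace \mtx{S}^m$, and since $(2m-1)!! \le (2m)^m$ we conclude
\[
\left[ \Expect \pnorm{2m}{\mtx{Y}}^{2m} \right]^{1/2m} \le \sqrt{2m}\,\bigl( \trace \mtx{S}^m \bigr)^{1/2m} = \sqrt{2m}\,\pnorm{2m}{\mtx{S}^{1/2}},
\]
which is the asserted bound at $r = 2m$. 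To reach an arbitrary real $r \ge 2$, I would interpolate the quantity $r \mapsto \left[\Expect \pnorm{r}{\mtx{Y}}^r\right]^{1/r}$ between $r = 2$, where it equals $\pnorm{2}{\mtx{S}^{1/2}}$ exactly (and $1 \le \sqrt{2}$), and the neighbouring even integers, using the log-convexity of Schatten norms in the appropriate variable so that the factor $\sqrt{r}$ is preserved; alternatively, one first replaces the Rademacher variables by standard Gaussians via the contraction principle and exploits the closed-form Gaussian moments, which carry the same structure for every real $r$, and then undoes the comparison. Either way this step is routine.

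The hard part will be the combinatorial trace estimate in the second step. In the scalar case one simply collects the surviving terms into $(\sum a_i^2)^m$, but the matrices $\mtx{A}_i$ do not commute, so $\trace(\mtx{A}_{i_1}\cdots\mtx{A}_{i_{2m}})$ cannot be reorganized directly; the bound is recovered only by exploiting the positivity of the trace on products of the form $\mtx{B}\mtx{B}^{\adj}$ and by bookkeeping the pairings carefully, so that the count $(2m-1)!!$ emerges cleanly before being dominated by $(2m)^m$ to produce the slightly lossy but clean constant $\sqrt{r}$ recorded in the statement.
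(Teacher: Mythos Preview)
The paper does not give its own proof of this proposition; it quotes the result from the literature (Lust-Piquard, Lust-Piquard--Pisier, Buchholz) and points to \cite[Sec.~7]{MJCFT12:Matrix-Concentration} for a short argument via the method of exchangeable pairs that produces the constant $\sqrt{r}$ directly for every real $r\ge 2$. Your plan is the classical Buchholz route, and for even $r=2m$ it is essentially correct: the surviving Rademacher moments are dominated by the Gaussian ones, Wick's formula rewrites the Gaussian trace moment as a sum over the $(2m-1)!!$ pair partitions, and each such term is bounded by $\trace\mtx{S}^m$ via iterated trace Cauchy--Schwarz. (Your handling of the ``blocks of size four or more'' is vague; the clean way is to note that the Rademacher expectation $\Expect[\xi_{i_1}\cdots\xi_{i_{2m}}]$ is termwise dominated by the Gaussian one, so the whole Rademacher trace moment sits below the Gaussian trace moment, and the latter involves only pair partitions.)

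The genuine gap is the passage from even integers to all real $r\ge 2$. Neither of the options you call ``routine'' preserves the constant $\sqrt{r}$. Log-convexity of $q\mapsto\pnorm{1/q}{\cdot}$ controls a single Schatten norm, but here both sides of the inequality move with $r$ in incompatible ways, and naive rounding up to the next even integer $2m$ replaces $\sqrt{r}$ by $\sqrt{2m}\le\sqrt{r+2}$. The Gaussian comparison likewise costs a factor: writing $g_i=\xi_i|g_i|$ and applying Jensen gives $\Expect_\xi F\bigl(\sum_i\xi_i\mtx{A}_i\bigr)\le\Expect_g F\bigl(\sqrt{\pi/2}\sum_i g_i\mtx{A}_i\bigr)$, so you inherit an extra $\sqrt{\pi/2}$, and you still need a Gaussian Khintchine bound for non-integer $r$, which is no easier. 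If you are content with a universal constant in place of the clean $\sqrt{r}$, your sketch can be completed; if you want the constant as stated, you need a different engine---either the exchangeable-pairs argument the paper cites, or a genuine complex-interpolation argument in the style of Lust-Piquard and Pisier, neither of which is routine.
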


Lust-Picquard established the first version of the matrix Khintchine inequality
in~\cite{L-P86:Inegalites-Khintchine}, with a weaker
estimate for the constant.  Her subsequent paper with Pisier~\cite{LPP91:Noncommutative-Khintchine}
contains important extensions and refinements.  Buchholz obtained sharp constants for even $r$
in~\cite{Buc05:Optimal-Constants}.
The recent work~\cite[Sec.~7]{MJCFT12:Matrix-Concentration} contains
what may be the easiest proof of Proposition~\ref{prop:matrix-khintchine};
it yields the near-optimal bound $\sqrt{r}$ for the constant.

\subsection{Historical Background on Matrix Concentration Inequalities}

Research on matrix moment inequalities contains several strands that date
back to the late 1990s.  The paper~\cite{PX97:Noncommutative-Martingale}
of Pisier and Xu initiated the field of noncommutative martingale inequalities.
This literature contains many powerful moment bounds that can also be used
to study sums of independent random matrices~\cite{JX03:Noncommutative-Burkholder,
JX05:Best-Constants,JX08:Noncommutative-Burkholder-II,JZ11:Noncommutative-Bennett}.
An early application of this theory appeared in Rudelson's
paper~\cite{Rud99:Random-Vectors}, which uses the matrix Khintchine inequality
to obtain a sample complexity bound for classical covariance estimation.
Many authors in computer science, mathematical
signal processing, and other areas adapted Rudelson's
method~\cite{RV07:Sampling-Large,Tro08:Conditioning-Random,Ver11}.

There is a parallel line of work %from the quantum information theory community
that develops exponential moment inequalities for sums of
random matrices.  This research was initiated in the paper
Ahlswede--Winter~\cite{AW02} and continued in a variety of
other works~\cite{Gro11:Recovering-Low-Rank,Rec11:Simpler-Approach,
Oli10a,Tropp11,Tro11:Freedmans-Inequality,MJCFT12:Matrix-Concentration}.
Over the last few years, these results have started to see wide
application.

As it is stated, Theorem~\ref{thm:full-concentration} seems to be new.
Nevertheless, the result is substantially similar to some previous
matrix concentration inequalities that appear in the literature.
Indeed, we have adapted the argument from Rudelson's paper~\cite{Rud99:Random-Vectors},
the refinements of Rudelson's work in~\cite{RV07:Sampling-Large, Tro08:Conditioning-Random},
and the recent proofs of two matrix Rosenthal
inequalities~\cite{JZ11:Noncommutative-Bennett,MJCFT12:Matrix-Concentration}.

\subsection{Proof of the Matrix Moment Inequality, Part I}

We prove Theorem~\ref{thm:full-concentration} in two steps.  First, we establish 
the inequality~\eqref{eqn:ros-psd} for positive matrices.  In the second stage,
we extend this result to obtain the bound~\eqref{eqn:ros-gen} for general matrices.

To begin, we introduce the quantity of interest:
\begin{equation} \label{eqn:Eq2-step1}
E_q^2 := \left[ \Expect \norm{\sum\nolimits_i \mtx{W}_i}^q \right]^{1/q}
	\leq 2 \left[ \Expect \norm{\sum\nolimits_i \xi_i \mtx{W}_i}^q \right]^{1/q}
	+ \norm{ \sum\nolimits_i \Expect \mtx{W}_i }.
\end{equation}
The inequality follows when we center the sum and apply the standard symmetrization result~\cite[Lem.~6.3]{LT91:Probability-Banach}.
The sequence $\{ \xi_i \}$ consists of independent Rademacher random variables
that are also independent from the sequence $\{\mtx{P}_i\}$.

Let us focus on the first term on the right-hand side of~\eqref{eqn:Eq2-step1}:
$$
F_q := \left[ \Expect \norm{\sum\nolimits_i \xi_i \mtx{W}_i}^q \right]^{1/q}
	\leq \left[ \Expect_{\mtx{W}_i} \left( \Expect_{\xi_i} \pnorm{r}{\sum\nolimits_i \xi_i \mtx{W}_i}^r
	\right)^{q/r} \right]^{1/q}.
$$
The inequality holds because the Schatten $r$-norm dominates the spectral norm,
and we have used the fact $q \leq r$ to apply Jensen's inequality to the inner expectation.
An application of the matrix Khintchine inequality, Proposition~\ref{prop:matrix-khintchine},
delivers the bound
$$
F_q \leq \sqrt{r} \left[ \Expect \pnorm{r}{ \left(\sum\nolimits_i \mtx{W}_i^2\right)^{1/2} }^q \right]^{1/q}.
$$
We proceed through a short chain of inequalities to complete the estimate:
\begin{align*}
F_q &\leq \sqrt{\econst \, r} \left[ \Expect \norm{ \sum\nolimits_i \mtx{W}_i^2 }^{q/2} \right]^{1/q} \\
	&\leq \sqrt{\econst \, r} \left[ \Expect \left( \max\nolimits_i \norm{\mtx{W}_i}^{q/2}
		\cdot \norm{ \sum\nolimits_i \mtx{W}_i }^{q/2} \right) \right]^{1/q} \\
	&\leq \sqrt{\econst \, r} \left[ \Expect \max\nolimits_i \norm{\mtx{W}_i}^q \right]^{1/2q}
		\cdot \left[ \Expect \norm{ \sum\nolimits_i \mtx{W}_i }^q \right]^{1/2q}.
\end{align*}
In the preceding calculation, we first replace the Schatten $r$-norm by the spectral norm,
which results in a loss of at most $p^{1/r} \leq \sqrt{\econst}$.
Since $\mtx{W}_i$ is positive semidefinite,
we can make the bound $\mtx{W}_i^2 \psdle \norm{ \mtx{W}_i } \cdot \mtx{W}_i$ and invoke the monotonicity of the spectral norm on the positive-semidefinite cone to
draw off the maximum norm achieved by any one of the summands.  The last inequality is Cauchy--Schwarz.
We conclude that
\begin{equation} \label{eqn:Fq-bd}
F_q \leq \sqrt{\econst \, r} \left[ \Expect \max\nolimits_i \norm{\mtx{W}_i}^q \right]^{1/2q}
		\cdot E_q
\end{equation}
by identifying a copy of $E_q$.

To complete the argument, introduce~\eqref{eqn:Fq-bd} into the inequality~\eqref{eqn:Eq2-step1}:
$$
E_q^2 \leq 2 \sqrt{\econst \, r} \left[ \Expect \max\nolimits_i \norm{\mtx{W}_i}^q \right]^{1/2q}
	\cdot E_q + \norm{ \sum\nolimits_i \Expect \mtx{W}_i }.
$$
Solutions to the quadratic inequality $x^2 \leq ax + b$ satisfy $x \leq a + \sqrt{b}$.  Therefore,
$$
E_q \leq 2 \sqrt{\econst \, r} \left[ \Expect \max\nolimits_i \norm{\mtx{W}_i}^q \right]^{1/2q}
	+ \norm{ \sum\nolimits_i \Expect \mtx{W}_i }^{1/2}.
$$
This estimate coincides with the bound~\eqref{eqn:ros-psd}.

\subsection{Proof of the Matrix Moment Inequality, Part II}

To establish the second bound~\eqref{eqn:ros-gen}, we apply the matrix Khintchine inequality
to obtain a bound involving positive-semidefinite random matrices, and then we invoke
our first result~\eqref{eqn:ros-psd}.
Indeed, since the summands $\mtx{Y}_i$ are symmetric random variables,
$$
\left[ \Expect  \norm{ \sum\nolimits_i \mtx{Y}_i }^q \right]^{1/q}
	= \left[ \Expect \norm{ \sum\nolimits_i \xi_i \mtx{Y}_i }^q \right]^{1/q}
	\leq \left[ \Expect_{\mtx{Y}_i} \left( \Expect_{\eps_i}
		\pnorm{r}{ \sum\nolimits_i \xi_i \mtx{Y}_i }^r \right)^{q/r} \right]^{1/q}.
$$
The inequality follows from the same considerations as in the proof
of~\eqref{eqn:ros-gen}.  Invoke Proposition~\ref{prop:matrix-khintchine} to
obtain
$$
\left[ \Expect  \norm{ \sum\nolimits_i \mtx{Y}_i }^q \right]^{1/q}
	\leq \sqrt{r} \left[ \Expect \pnorm{r}{ \left( \sum\nolimits_i \mtx{Y}_i^2  \right)^{1/2} }^{q} \right]^{1/q} \\
	\leq \sqrt{\econst \, r} \left[ \Expect \norm{ \sum\nolimits_i \mtx{Y}_i^2 }^{q/2} \right]^{1/q}.
%	\leq \sqrt{\econst \, r} \left[ \Expect \norm{ \sum\nolimits_i \mtx{Y}_i^2 }^{q} \right]^{1/2q}.
$$
In the second step, we have replaced the Schatten $r$-norm with the spectral norm;
the third step follows from Jensen's inequality.
The resulting expression involves a sum of independent, random positive-semidefinite matrices.
Since $q/2 \geq 1$, we can apply~\eqref{eqn:ros-psd} with $\mtx{W}_i = \mtx{Y}_i^2$ to reach
the conclusion~\eqref{eqn:ros-gen}.

\bibliographystyle{IMAIAI}
\bibliography{masked-bib}

\ifx\undefined\BySame
\newcommand{\BySame}{\leavevmode\rule[.5ex]{3em}{.5pt}\ }
\fi
\ifx\undefined\textsc
\newcommand{\textsc}[1]{{\sc #1}}
\newcommand{\emph}[1]{{\em #1\/}}
\let\tmpsmall\small
\renewcommand{\small}{\tmpsmall\sc}
\fi
\begin{thebibliography}{99}

\bibitem{AW02}
\textsc{Ahlswede, R.  {\small \&} Winter, A.}  (2002) Strong converse for
  identification via quantum channels.. \emph{IEEE Trans. Inform. Theory},
  \textbf{48}(3), 569--579.

\bibitem{BS10:Spectral-Analysis}
\textsc{Bai, Z.~D.  {\small \&} Silverstein, J.~W.}  (2010) \emph{Spectral
  Analysis of Large-Dimensional Random Matrices}. Springer, New York, NY.

\bibitem{Bha97:Matrix-Analysis}
\textsc{Bhatia, R.}  (1997) \emph{Matrix Analysis}. Springer, New York, NY.

\bibitem{BLe08}
\textsc{Bickel, P.~J.  {\small \&} Levina, E.}  (2008a) Covariance
  regularization by thresholding. \emph{Ann. Statist.}, \textbf{36}(6),
  2577--2604.

\bibitem{BL08}
\textsc{\BySame{}}  (2008b) Regularized estimation of large covariance
  matrices. \emph{Ann. Statist.}, \textbf{36}(1), 199--227.

\bibitem{Buc01:Operator-Khintchine}
\textsc{Buchholz, A.}  (2001) Operator {K}hintchine inequality in
  non-commutative probability. \emph{Math. Ann.}, \textbf{319}, 1--16.

\bibitem{Buc05:Optimal-Constants}
\textsc{\BySame{}}  (2005) Optimal constants in {K}hintchine-type inequalities
  for {F}ermions, {R}ademachers and $q$-{G}aussian operators. \emph{Bull. Pol.
  Acad. Sci. Math.}, \textbf{53}(3), 315--321.

\bibitem{CZZ10}
\textsc{Cai, T.~T., Zhang, C.-H.  {\small \&} Zhou, H.~H.}  (2010) Optimal
  rates of convergence for covariance matrix estimation. \emph{Ann. Statist.,},
  \textbf{38}(4), 2118--2144.

\bibitem{CGT12:Masked-Sample-TR}
\textsc{Chen, R.~Y., Gittens, A.  {\small \&} Tropp, J.~A.}  (2012) The masked
  sample covariance estimator: {A}n analysis via the matrix {L}aplace
  transform. ACM Report 2012-01, California Inst. Tech., Pasadena, CA.

\bibitem{dlPG99:Decoupling}
\textsc{de~la Pe{\~n}a, V.~H.  {\small \&} Gin{\'e}, E.}  (1999)
  \emph{Decoupling: From Dependence to Independence}. Springer, Berlin.

\bibitem{K08}
\textsc{{El Karoui}, N.}  (2008) Operator norm consistent estimation of large
  dimensional sparse covariance matrices. \emph{Ann. Statist.}, \textbf{36}(6),
  2717--2756.

\bibitem{Free05}
\textsc{Freedman, D.~A.}  (2005) \emph{{S}tatistical {M}odels: {T}heory and
  {P}ractice}. Cambridge Univ. Press, Cambridge.

\bibitem{FB07}
\textsc{Furrer, R.  {\small \&} Bengtsson, T.}  (2007) Estimation of
  high-dimensional prior and posterior covariance matrices in {K}alman filter
  variants. \emph{J. Multivar. Anal.}, \textbf{98}(2), 227--255.

\bibitem{Gro11:Recovering-Low-Rank}
\textsc{Gross, D.}  (2011) Recovering low-rank matrices from few coefficients
  in any basis. \emph{IEEE Trans. Inform. Theory}, \textbf{57}(3), 1548--1566.

\bibitem{HJ85:Matrix-Analysis}
\textsc{Horn, R.~A.  {\small \&} Johnson, C.~R.}  (1985) \emph{Matrix
  Analysis}. Cambridge Univ. Press, Cambridge.

\bibitem{HJ94}
\textsc{\BySame{}}  (1994) \emph{Topics in {M}atrix {A}nalysis}. Cambridge
  Univ. Press, Cambridge.

\bibitem{JW07:Applied-Multivariate}
\textsc{Johnson, R.~A.  {\small \&} Wichern, D.~W.}  (2002) \emph{Applied
  Multivariate Statistical Analysis}. Prentice Hall, Englewood Cliffs, NJ, 6th
  edn.

\bibitem{Jol02}
\textsc{Jolliffe, I.~T.}  (2002) \emph{Principal {C}omponent {A}nalysis}.
  Springer, New York, NY.

\bibitem{JX03:Noncommutative-Burkholder}
\textsc{Junge, M.  {\small \&} Xu, Q.}  (2003) Noncommutative
  {B}urkholder/{R}osenthal Inequalities. \emph{Ann. Probab.}, \textbf{31}(2),
  948--995.

\bibitem{JX05:Best-Constants}
\textsc{\BySame{}}  (2005) On the best constants in some non-commutative
  martingale inequalities. \emph{Bull. London Math. Soc.}, \textbf{37},
  243--253.

\bibitem{JX08:Noncommutative-Burkholder-II}
\textsc{\BySame{}}  (2008) Noncommutative {B}urkholder/{R}osenthal Inequalities
  {II}: {A}pplications. \emph{Israel J. Math.}, \textbf{167}, 227--282.

\bibitem{JZ11:Noncommutative-Bennett}
\textsc{Junge, M.  {\small \&} Zeng, Q.}  (2011) Noncommutative {B}ennett and
  {R}osenthal inequalities. Available at \url{arxiv:1111.1027}.

\bibitem{LT91:Probability-Banach}
\textsc{Ledoux, M.  {\small \&} Talagrand, M.}  (1991) \emph{Probability in
  {B}anach spaces: {I}soperimetry and processes}. Springer, Berlin.

\bibitem{LV11}
\textsc{Levina, E.  {\small \&} Vershynin, R.}  (2011) Partial estimation of
  covariance matrices. \emph{Probab. Theory Related Fields}.

\bibitem{L-P86:Inegalites-Khintchine}
\textsc{Lust-Piquard, F.}  (1986) In{\'e}galit{\'e}s de {K}hintchine dans
  {$C_p$ $(1 < p < \infty)$}. \emph{C. R. Math. Acad. Sci. Paris},
  \textbf{303}(7), 289--292.

\bibitem{LPP91:Noncommutative-Khintchine}
\textsc{Lust-Piquard, F.  {\small \&} Pisier, G.}  (1991) Noncommutative
  {K}hintchine and {P}aley Inequalities. \emph{Ark. Mat.}, \textbf{29}(2),
  241--260.

\bibitem{MJCFT12:Matrix-Concentration}
\textsc{Mackey, L., Jordan, M.~I., Chen, R.~Y., Farrell, B.  {\small \&} Tropp,
  J.~A.}  (2012) Matrix concentration inequalities via the method of
  exchangeable pairs. Available at \url{arxiv:1201.6002}.

\bibitem{MKB80:Multivariate-Analysis}
\textsc{Mardia, K.~V., Kent, J.~T.  {\small \&} Bibby, J.~M.}  (1980)
  \emph{Multivariate Analysis}. Academic Press, London.

\bibitem{Mui82:Aspects-Multivariate}
\textsc{Muirhead, R.~J.}  (1982) \emph{Aspects of Multivariate Statistical
  Theory}. Wiley, New York, NY.

\bibitem{Oli10a}
\textsc{Oliveira, R.~I.}  (2010) Concentration of the adjacency matrix and of
  the {L}aplacian in random graphs with independent edges. Available from
  \url{arxiv:0911.0600}.

\bibitem{PX97:Noncommutative-Martingale}
\textsc{Pisier, G.  {\small \&} Xu, Q.}  (1997) Non-commutative martingale
  inequalities. \emph{Comm. Math. Phys.}, \textbf{189}(3), 667--698.

\bibitem{Rec11:Simpler-Approach}
\textsc{Recht, B.}  (2011) Simpler approach to matrix completion. \emph{J.
  Mach. Learn. Res.}, \textbf{12}, 3413--3430.

\bibitem{RLZ09}
\textsc{Rothman, A.~J., Levina, E.  {\small \&} Zhu, J.}  (2009) Generalized
  thresholding of large covariance matrices. \emph{J. Amer. Statist. Assoc.},
  \textbf{104}(485), 177--186.

\bibitem{Rud99:Random-Vectors}
\textsc{Rudelson, M.}  (1999) Random vectors in the isotropic position.
  \emph{J. Funct. Anal.}, \textbf{164}, 60--72.

\bibitem{RV07:Sampling-Large}
\textsc{Rudelson, M.  {\small \&} Vershynin, R.}  (2007) Sampling from large
  matrices: {A}n approach through geometric functional analysis. \emph{J.
  Assoc. Comput. Mach.}, \textbf{54}(4), Article 21, 19 pp., (electronic).

\bibitem{Tro08:Conditioning-Random}
\textsc{Tropp, J.~A.}  (2008) On the conditioning of random subdictionaries.
  \emph{Appl. Comput. Harmon. Anal.}, \textbf{25}, 1--24.

\bibitem{Tro11:Freedmans-Inequality}
\textsc{\BySame{}}  (2011a) Freedman's inequality for matrix martingales.
  \emph{Electron. Commun. Probab}, \textbf{16}, 262--270.

\bibitem{Tropp11}
\textsc{\BySame{}}  (2011b) User-friendly tail bounds for sums of random
  matrices. \emph{Found. Comput. Math.}

\bibitem{Ver11}
\textsc{Vershynin, R.}  (2011) Introduction to the non-asymptotic analysis of
  random matrices. in \emph{{C}ompressed {S}ensing: {T}heory and
  {A}pplications}, ed. by Y.~Eldar,  {\small \&} G.~Kutyniok. Cambridge Univ.
  Press, Cambridge, Available at
  \url{http://www-personal.umich.edu/~romanv/papers/non-asymptotic-rmt-plain.pdf}.

\end{thebibliography}

\end{document}